\newcommand{\PG}{{\rm PG}}
\newcommand{\LL}{{\cal L}}
\newcommand{\F}{\ensuremath{\mathfrak{F}}}
\newcommand{\C}{{\cal C}}
\newcommand{\FFF}{\ensuremath{\mathcal{F}}}
\newcommand{\qbreuk}[4]{\frac{(q^{#1}-1)\cdots (q^{#2}-1)}{(q^{#3}-1)\cdots (q^{#4}-1)}}
\newtheorem{theorem}{Theorem}[section]
\newtheorem{lemma}[theorem]{Lemma}
\newtheorem{example}[theorem]{Example}
\newtheorem{examples}[theorem]{Examples}
\newtheorem{corollary}[theorem]{Corollary}
\newtheorem*{remark*}{Remark}
\newtheorem*{remarks*}{Remarks}
\newtheorem*{assumption*}{Assumption}
\newcommand{\Gauss}[2]{\ensuremath{\genfrac{[}{]}{0pt}{0}{#1}{#2}}}
\newcommand{\gauss}[2]{\ensuremath{\genfrac{[}{]}{0pt}{1}{#1}{#2}}}
\title{On the chromatic number of two generalized Kneser graphs}
\author{Jozefien D'haeseleer\thanks{Department of Mathematics: Analysis, Logic and Discrete Mathematics, Ghent University, Krijgslaan 281, Building S8, 9000 Gent, Flanders, Belgium}\and Klaus Metsch\thanks{Justus-Liebig-Universit\"at, Mathematisches Institut, Arndtstra\ss e 2, D-35392 Gie\ss en}\and Daniel Werner\footnotemark[2]}
\date{\today}
\begin{document}

\maketitle

\begin{abstract}
We determine the chromatic number of some graphs of flags in buildings of type $A_4$, namely of the Kneser graphs of flags of type $\{2,4\}$ in the vector spaces $GF(q)^5$ for $q\ge3$, and of the Kneser graph of flags of type $\{2,3\}$ in the vector spaces $GF(q)^5$ for large $q$.
\end{abstract}

{\bf Keywords:} $q$-analog of generalized Kneser graph, chromatic number

\section{Introduction}

Let $V$ be an $n$-dimensional vector space over the finite field $GF(q)$ with $n\ge 2$. A \emph{flag} of $V$ is a set $F$ of non-trivial subspaces of $V$ (that is, different from $\{0\}$ and $V$) such that for all $\alpha, \beta \in F$ one has $\alpha \subset \beta$ or $\beta\subset\alpha$. The subset $\{\dim(\alpha)\mid \alpha\in F\}$ is called the \emph{type} of $F$ and it is a subset of $\{1,2,\dots,n-1\}$. Two flags $F$ and $G$ are in \emph{general position} if $\alpha\cap \beta=\{0\}$ or $\alpha+\beta=V$ for all $\alpha\in F$ and $\beta\in G$.

For $\Omega\subseteq \{1,2,\dots,n-1\}$ we define the $q$-Kneser graph $qK_{n;\Omega}$ to be the graph whose vertices are all flags of type $\Omega$ of $V$ with two vertices adjacent when the corresponding flags are in general position and for $k\in\{1,\dots,n-1\}$ we put $qK_{n;k}:=qK_{n;\{k\}}$.

We are interested in the chromatic number of these graphs and hence in their independence number. In many cases these numbers are known, when $|\Omega|=1$, $\Omega=\{k\}$ and $n\ge 2k$. To state the following results, for any $1$-dimensional subspace $T$ of $V$ we call the set of all flags $F$ of type $\Omega$ and for which $F\cup\{T\}$ is a flag, the \emph{point-pencil} (of flags of type $\Omega$) with base point $T$. Furthermore, we use the Gaussian coefficient
\begin{align*}
    {\Gauss ab}_q:=\qbreuk{a}{a-b+1}{b}{}
\end{align*}
for all $a,b\in \mathbb{N}$ and every prime power $q \geq 2$. The Gaussian binomial coefficient ${\gauss ab}_q$ is equal to the number of $b$-spaces of the vector space $\mathbb{F}^a_q$, or in projective context, the number of $(b-1)$-spaces in the projective space $\PG(a-1,q)$.
Moreover, we will denote the number $\gauss{n+1}{1}_q$ by the symbol $\theta_n$.

\begin{theorem}[\cite{Blokhuis_Brouwer_Szoenyi_1}]
If $k > 3$, and either $q > 3$ and $n > 2k + 1$, or $q = 2$ and $n > 2k + 2$, then
the chromatic number of the $q$-Kneser graph is $\chi(qK_{n;k}) = \gauss{n-k+1}{1}_q$.
Moreover, each color class of a minimum coloring is (contained in) a point-pencil and the base points of these point-pencils are the $1$-subspaces of a fixed subspace of dimension $n+1-k$.
\end{theorem}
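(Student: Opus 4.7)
Fix an $(n+1-k)$-dimensional subspace $U$ of $V$. Every $k$-subspace $K$ meets $U$ non-trivially, since $\dim K + \dim U = n+1 > n$, so we may color $K$ by any $1$-subspace $T(K) \subseteq K \cap U$. Two $k$-subspaces receiving the same color share a common $1$-subspace and are therefore non-adjacent in $qK_{n;k}$. This gives a proper coloring with $\gauss{n-k+1}{1}_q$ colors, so $\chi(qK_{n;k}) \le \gauss{n-k+1}{1}_q$.

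For the matching lower bound, I would combine two ingredients: a $q$-analog of the Hilton--Milner theorem, guaranteeing that under the stated hypotheses any coclique in $qK_{n;k}$ not contained in a point-pencil has size at most some $H(n,k,q)$ that is significantly smaller than the point-pencil size $\gauss{n-1}{k-1}_q$ (morally of order $\gauss{n-1}{k-1}_q/q^k$); and the Bose--Burton theorem, which says that a set of $1$-subspaces of $V$ meeting every $k$-subspace has size at least $\gauss{n-k+1}{1}_q$, with equality only if it consists of all $1$-subspaces of an $(n+1-k)$-dimensional subspace. Given a proper coloring with $m$ colors, classify each color class as \emph{trivial} — contained in a point-pencil with some base point $T_i$ — or \emph{non-trivial}, and say there are $s$ trivial classes with base points $T_1,\dots,T_s$. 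Every $k$-subspace missing all the $T_i$ must lie in one of the $m-s$ non-trivial classes, so the number of $k$-subspaces disjoint from $\{T_1,\dots,T_s\}$ is at most $(m-s)H(n,k,q)$. Assuming for contradiction $m < \gauss{n-k+1}{1}_q$, one has $s < \gauss{n-k+1}{1}_q$, so Bose--Burton (applied inside a suitable complement) provides a lower bound on the number of missed $k$-subspaces; the numerical hypotheses on $q$, $n$ and $k$ are exactly what is needed to make $(m-s)H(n,k,q)$ fall short of this bound, forcing $m \ge \gauss{n-k+1}{1}_q$. Tracing the equality case then pins every color class inside a point-pencil whose base point belongs to an $(n+1-k)$-dimensional subspace, giving the uniqueness statement.

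The main obstacle is the Hilton--Milner step: one must produce an upper bound $H(n,k,q)$ on non-trivial cocliques sharp enough to beat the Bose--Burton lower bound in the above comparison, and this bound degenerates whenever $q$ is small or $n$ is close to $2k$, which is precisely why the theorem excludes those ranges. A secondary difficulty is handling "mixed" colorings in which many classes are non-trivial but each close to the extremal size $H(n,k,q)$: for these the crude bound $(m-s)H(n,k,q)$ is nearly tight, and closing the gap typically requires either a stability version of the $q$-Hilton--Milner theorem or a more delicate structural analysis of how the base points of the trivial classes must be distributed across $\PG(n-1,q)$.
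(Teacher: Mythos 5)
Your upper-bound construction is correct and standard: fixing an $(n{+}1{-}k)$-dimensional subspace $U$, every $k$-space meets $U$ non-trivially and can be colored by a $1$-subspace of the intersection, giving a proper coloring with $\theta_{n-k}$ colors whose classes lie in point-pencils.

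Your two ingredients for the lower bound --- a $q$-Hilton--Milner bound for non-trivial cocliques and a structural theorem on the base points of the trivial classes --- are the right ones, and match both the cited paper and the blueprint the present paper reuses in Section~\ref{sec_line-solid} for the flag analog Theorem~\ref{ddd}. The genuine gap is the Bose--Burton step. Bose--Burton, as you state it, is only a \emph{threshold} theorem: it says a point set blocking every $k$-space has size at least $\theta_{n-k}$ and classifies equality, but it does not quantify how many $k$-subspaces escape a set of $s<\theta_{n-k}$ points, which is what your comparison $g(s)\le(m-s)H(n,k,q)$ requires. A deficiency bound $g(s)$ of the needed strength is a separate (and non-trivial) result, and the phrase ``applied inside a suitable complement'' does not indicate how to extract it from plain Bose--Burton. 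In the paper's own proof of Theorem~\ref{ddd} this step is handled by direct double counting rather than by a blocking-set theorem: one introduces the set $\LL$ of lines meeting the base-point set $M$, bounds $|\LL|$ from above via the line multiplicities $c_L$ (using $\sum c_L=|M|\theta_{n-k}$ and $\sum c_L(c_L-1)=|M|(|M|-1)$), then bounds $\bigl|\bigcup_{P\in M}\FFF(P)\bigr|$ by $|\LL|$ times the number of flags above a fixed line; the equality case $c_L\in\{1,q+1\}$ then forces $M$ to be a subspace, \emph{recovering} the Bose--Burton conclusion instead of invoking it. You correctly flag the secondary difficulty that $(m-s)H(n,k,q)$ is only barely strong enough, so the Hilton--Milner constant must be sharp; but the missing quantitative blocking estimate is the primary hole in your sketch.
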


The main tool in the proof of that result is the knowledge of the independence number of the graph $qK_{n;k}$, the structure of the largest independent sets and a Hilton-Milner type theorem. In this setting, an independent set is a set of pairwise intersecting $k$-spaces in $V$, which is also called an \emph{Ed\H{o}s-Ko-Rado set} in reference to the authors who first introduced the problem in set theory in \cite{erdos_ko_rado}. Indeed, for $n>2k$, a point-pencil is an independent set, and it was shown in \cite{Hsieh1975,frankl} that these are the largest independent sets.

A Hilton-Milner type theorem, named in reference to the authors of \cite{hilton milner}, is a theorem providing the second largest families of pairwise intersecting sets. In the vector space setting such a theorem was proven in \cite{Blokhuis_Brouwer_Szoenyi_1} and states that every independent set, which is sufficiently large, is contained in a point-pencil.

In this paper we determine the chromatic numbers of the graphs $qK_{5;\Omega}$ for  $\Omega=\{2,3\}$ and $q\not=2$ and for $\Omega=\{2,4\}$ and very large $q$. Our first result is the following.

\begin{theorem}\label{main1}
For $q\ge 3$ the chromatic number of the Kneser graph $qK_{5;\{2,4\}}$ is $\gauss{4}{1}_q$. Moreover, each color class of a minimum coloring is contained in a unique point-pencil and the base points of the obtained points-pencil are the $1$-subspaces of a fixed $4$-dimensional subspace.
\end{theorem}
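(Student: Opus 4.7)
Fix any $4$-dimensional subspace $W$ of $V=GF(q)^5$. For each $1$-subspace $T\subset W$, the point-pencil $P_T$ of flags of type $\{2,4\}$ through $T$ is independent in $qK_{5;\{2,4\}}$, because any two of its flags contain $T$ in their $2$-components and hence intersect non-trivially there. For an arbitrary flag $(\alpha,\beta)$, the dimension formula gives $\dim(\alpha\cap W)\ge 2+4-5=1$, so $\alpha$ contains a $1$-subspace $T$ of $W$ and $(\alpha,\beta)\in P_T$. The $\gauss{4}{1}_q$ pencils $P_T$ with $T\subset W$ therefore form a proper coloring, giving $\chi(qK_{5;\{2,4\}})\le\gauss{4}{1}_q$.

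\textbf{Lower bound via Bose--Burton.} For the matching lower bound and the classification, the plan is to show that in any proper coloring of $qK_{5;\{2,4\}}$ with $c\le\gauss{4}{1}_q$ colors, each color class $\C_i$ is contained in a point-pencil $P_{T_i}$. Granting this, every flag $(\alpha,\beta)$ lies in some $P_{T_i}$, so every $2$-subspace $\alpha$ of $V$ contains some $T_i$. Equivalently, $\{T_1,\dots,T_c\}$ is a set of points of $\PG(4,q)$ meeting every line. By the Bose--Burton theorem, any such set has at least $\theta_3=\gauss{4}{1}_q$ points, with equality only when the set is precisely the point set of a hyperplane, i.e.\ the $1$-subspaces of a $4$-dimensional subspace of $V$. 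This simultaneously yields $\chi\ge\gauss{4}{1}_q$ and the stated location of the base points; uniqueness of the pencil containing each $\C_i$ then follows from a short counting argument on the flags in $\C_i$.

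\textbf{The main obstacle.} The core difficulty is the structural claim that each color class lies in a point-pencil. I would approach it in two layers: (i) an Erd\H os--Ko--Rado type result identifying the point-pencils, of size $\gauss{4}{1}_q\gauss{3}{1}_q$, as the maximum independent sets of $qK_{5;\{2,4\}}$; and (ii) a Hilton--Milner type stability theorem providing an explicit bound $h(q)$ such that any independent set of size exceeding $h(q)$ is contained in a point-pencil. Averaging then gives that the mean color class size $|V|/c\ge\gauss{5}{2}_q\gauss{3}{1}_q/\gauss{4}{1}_q$ is, for $q\ge 3$, comfortably above $h(q)$, forcing most classes into pencils. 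The remaining possibly-small classes are handled by a bootstrap: the pencils already produced cover all but a controlled set of flags, and a Bose--Burton argument on the residual $2$-spaces combined with the stability bound leads to the strict inequality $c>\gauss{4}{1}_q$ unless every color class is already contained in a point-pencil. Obtaining a Hilton--Milner bound $h(q)$ sharp enough to drive this bootstrap is where I expect the main technical effort, and where the hypothesis $q\ge 3$ most naturally enters.
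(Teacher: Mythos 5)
Your overall architecture matches the paper's: upper bound from point-pencils over a hyperplane, lower bound from an Erd\H os--Ko--Rado plus Hilton--Milner structural result (the paper cites exactly this from \cite{blokhuis2}, as Theorem~\ref{blokhuis}: independence number $e_0=\theta_3\theta_2$, and any independent set not in a point-pencil has size at most $e_1=2q^4+3q^3+4q^2+2q+1$), followed by a characterization of the base points. You are also right that once one knows every color class sits in a point-pencil, the base points block all lines of $\PG(4,q)$ and Bose--Burton finishes; the paper arrives at the same conclusion without citing Bose--Burton, by showing that every line through two base points is entirely contained in the base-point set, so that set is a subspace.

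The genuine gap is in the step you label ``bootstrap.'' Writing $x$ for the number of color classes not equal to a point-pencil and $M$ for the set of base points of those that are, a naive count --- $x$ classes each cover at most $e_1$ flags, the rest cover at most $|M|\theta_3\theta_2$ flags --- only yields $x=O(q^2)$, not $x=0$; averaging is in the same spirit and has the same limitation. The paper's way to close this is to let $\LL$ be the set of lines containing at least one point of $M$: the point-pencils cover exactly $|\LL|\theta_2$ flags, and double-counting incidences and collinear pairs in $M$ gives
\[
|\LL|\le |M|\theta_3-\frac{|M|(|M|-1)}{q+1},
\]
which encodes the crucial loss from base points sharing lines. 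Feeding this into the covering inequality forces $x=0$ for $q\ge4$, with equality conditions yielding $c_L\in\{1,q+1\}$ for every line through $M$ --- precisely the subspace characterization. You will not reach $x=0$ without this (or an equivalent) second-moment refinement; the plain averaging you describe does not suffice. A second omission: at $q=3$ the inequality above only gives $x\le 7$, and the paper needs a separate, more intricate argument (locating a line with all four of its points in $M$, then a solid, then a hyperplane containing most of $M$, then a sharper counting) to push through. Your remark that $q\ge3$ enters through the sharpness of the Hilton--Milner bound is not where the constraint actually bites; it bites in this endgame counting, and $q=3$ is the marginal case requiring special treatment.
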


The proof of this result relies on results of \cite{blokhuis2}, where it was not only shown that the independence number of  $qK_{5;\{2,4\}}$ is $(q+1)(q^2+1)(q^2+q+1)$, but also all maximal independent sets with more than $5q^2+5q+1$ elements were determined. One advantage of this graph is that the largest independent sets have an easy structure.

This is different for the graph $qK_{5;\{2,3\}}$. Here point-pencils are independent sets but they are not maximal and maximal independent sets are slightly larger than a point-pencil. Moreover, every point-pencil is contained in more than one largest independent set. Also, since flags of type $\{2,3\}$ are self-dual, there are examples that are dual to the maximal ones that contain a point-pencil. This is partially the reason why we can determine the chromatic number only for very large $q$.

\begin{theorem}\label{main2}
For $q>160\cdot 36^5$ the chromatic number of the Kneser graph $qK_{5;\{2,3\}}$ is $q^3+q^2+1$. Up to duality, each color class of a minimum coloring is related to a unique point-pencil and the $q^3+q^2+1$ corresponding base points are contained in a $4$-space.
\end{theorem}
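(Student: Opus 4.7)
The plan is to prove the upper bound $\chi\le q^3+q^2+1$ by an explicit coloring tied to a fixed $4$-dimensional subspace, and to prove the lower bound by combining the ratio bound $\chi\ge|V|/\alpha$ with a Hilton--Milner type classification of near-maximum independent sets of $qK_{5;\{2,3\}}$.

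For the upper bound, fix a $4$-space $H$ in $V=GF(q)^5$. For each $1$-subspace $T\subset H$ the point-pencil $\Pi_T$ is an independent set, and it extends to a maximal independent set by adjoining a small number of additional flags compatible with all of $\Pi_T$. Every flag $(A,B)$ satisfies either $A\subset H$, in which case several $T\subset A$ are candidate base points, or $A\cap H$ is a unique $1$-space $T(A)\subset H$, which becomes the natural choice. Assigning each flag to exactly one class via a fixed tie-breaking rule when $A\subset H$, and collapsing one projective line's worth of base points in $H$ (accounting for the correction $\gauss{4}{1}_q-q=q^3+q^2+1$), should produce a coloring with exactly $q^3+q^2+1$ classes.

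For the lower bound, I would first establish the independence number $\alpha$ together with a stability result: up to duality, every independent set within a small factor of $\alpha$ is contained in a bounded extension of a point-pencil. With $|V|=\gauss{5}{2}_q(q^2+q+1)$ and $\alpha$ only slightly exceeding the point-pencil size $(q^3+q^2+q+1)(q^2+q+1)$, a direct calculation gives $\chi\ge\lceil |V|/\alpha\rceil\ge q^3+q^2+1$. In a minimum coloring, every color class has size essentially equal to $\alpha$, so by stability each is attached to a unique point-pencil with base point $T_i$. Pairwise compatibility of the partition then forces coherence: intersections between different $\Pi_{T_i}$ must be absorbed into the extra flags of a single class, and iterating this constraint pushes the $q^3+q^2+1$ base points into a common $4$-space (or its dual analogue).

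The main obstacle is controlling the interaction between the primal and dual families of near-maximum independent sets. Since flags of type $\{2,3\}$ are self-dual, a minimum coloring could a priori use a mixture of the two types of color classes, each compatible only with its own hyperplane or point. Ruling out mixtures requires quantitative bounds on how much a ``primal'' maximal independent set can overlap with a ``dual'' one, and these bounds must dominate the slack terms coming from the stability theorems. Each layer of the argument contributes an error of order $q^k$ for small $k$, and the combined slack is precisely what forces the threshold $q>160\cdot 36^5$; making this quantitative is the technically demanding core of the proof.
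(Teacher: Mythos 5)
Your lower bound step does not work: the ratio bound $\chi\ge|\mathcal V|/\alpha$ falls far short of $q^3+q^2+1$. The number of line-plane flags is $|\mathcal V|=\gauss{5}{3}_q\gauss{3}{2}_q=\gauss{5}{2}_q\theta_2$, while the independence number is $\alpha=e_0=\theta_2(\theta_3+q^2)$. Hence
\[
\frac{|\mathcal V|}{\alpha}=\frac{\gauss{5}{2}_q}{\theta_3+q^2}
=\frac{q^6+q^5+2q^4+2q^3+2q^2+q+1}{q^3+2q^2+q+1}=q^3-q^2+3q+O(1),
\]
which is roughly $q^3-q^2$, about $2q^2$ below the claimed value $q^3+q^2+1$. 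So the fractional/ratio bound leaves a gap of order $q^2$ colors that no amount of rounding or Hilton--Milner stability will close. The slack has to come from the fact that distinct near-maximum independent sets overlap substantially, and this must be quantified; the paper's argument is precisely a careful accounting of these overlaps. Concretely, one assumes a covering $\mathcal F$ by exactly $\theta_3-q$ maximal EKR-sets, computes the total ``defect'' $\sum_{F\in\mathcal F}|F|-|\mathcal V|$ (Lemma \ref{delta}), and then runs a chain of structural lemmas (Lemmas \ref{solid1}--\ref{planes_are_small}) showing first that a common solid $S$ must carry almost all of the base points, and then that the number $\delta$ of classes \emph{not} based at a point of $S$ is forced down to $0$. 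Only at the end does one conclude that every member of $\mathcal F$ is necessary, which is what prevents a covering of size $<\theta_3-q$.

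Your upper bound sketch is headed in the right direction but is too vague to pin down the count $\theta_3-q$: simply ``collapsing a projective line's worth of base points'' is not a coloring rule. What actually works (Examples \ref{colorings}) is to remove a set $W$ of $q$ points from a solid $S$ and, for each remaining $P\in S\setminus W$, to use the enlarged EKR-set $\FFF(P,\nu(P))$ where $\nu(P)$ is a line through $P$ in $S$ chosen so that every line of $S$ meeting $W$ is some $\nu(P)$; the special parts of these $\FFF(P,\nu(P))$ then absorb exactly the flags whose line meets $S$ in a point of $W$. Finally, a caveat on your closing claim that ``every color class has size essentially equal to $\alpha$'': the total defect in Lemma \ref{delta} is about $2q^7$, which is large enough that $O(q^2)$ classes could individually be as small as $e_1\approx 4q^4$; Lemma \ref{ci_properties}\,(f) bounds their number, but this is a nontrivial part of the proof, not an automatic consequence of the size computation.
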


\begin{remarks*}\rm
\begin{enumerate}
\item In the Kneser graph $qK_{5;\{2,3\}}$ there are various possibilities for a minimum coloring. For example, using point-pencils, the $q^3+q^2+1$ base points corresponding to the color classes are distinct 1-dimensional subspaces of a 4-dimensional subspace $T$. However, the structure of the $q$ remaining 1-dimensional subspaces of $T$ is not fixed, indeed not even the dimension of the subspace they span is fixed.
\item We could imagine that our technique for the proof of \ref{main2} can be generalized to determine the chromatic number of the Kneser graphs $qK_{2n+1;\{n,n+1\}}$ for large values of $q$, provided one can determine the independence number and a Hilton-Milner type theorem for these graphs. This has been done however only for $n=2$ as mentioned above in \cite{blokhuis2} and for $n=3$ in \cite{KlausDaniel}.
\item
In \cite{KlausDaniel} the authors classified the largest independent sets of the Kneser graph $qK_{7;\{3,4\}}$ for $q\ge 27$ and also proved an upper bound for the second largest maximal independent sets. We are convinced that similar techniques as used in the present paper can determine the chromatic number of $qK_{6;\{3,4\}}$. Since a proof certainly will be more elaborate, we did not try to integrate such a proof in the present paper. In \cite{pH} the independence number of $qK_{n;\{1,n-1\}}$ was determined and up to our knowledge this has not yet been used to determine the chromatic number of the graph.
\end{enumerate}
\end{remarks*}

This paper is organized as follows. In Section 2 we determine the optimal colorings of the Kneser graph $qK_{5;\{2,4\}}$. In Section \ref{sec_colorings} we provide several examples for optimal colorings of the Kneser graph $qK_{5;\{2,3\}}$. In Section \ref{sec_lemma-on-pointset} we consider three points $P_1,P_2,P_3$ and a set $M$ of points in $\PG(4,q)$, $q$ large, with $M\cap\langle P_1,P_2,P_3\rangle=\emptyset$ and $|M|=cq^3$ for some positive constant $c<1$.

We prove that, if every one of the three points $P_i$ sees $M$ in only few directions, then there exists a solid $S$ that contains at least $mq^2$ points of $M$, where $m$ is a constant. This will be a crucial tool in Section \ref{sec_line-plane}, where we determine the chromatic number of the Kneser graph $qK_{5;\{2,3\}}$ for large values of $q$.

\section{The Kneser-graph $qK_{5;\{2,4\}}$}\label{sec_line-solid}
In this section we work in a vector space $V$ of dimension $5$ over the finite field $GF(q)$ for some prime power $q$. For each $1$-dimensional subspace $P$, we denote by $\FFF(P)$ the point-pencil with base point $P$.

\begin{example}\label{ex24}
If $S$ is a $4$-dimensional subspace of $V$ and $[S]$ the set of all its $1$-dimensional subspaces, then $\{\FFF(P)\mid P\in [S]\}$ is a covering of $qK_{5;\{2,4\}}$ with independent sets.
\end{example}

This example shows that there exists a coloring of $qK_{5;\{2,4\}}$ with $\theta_3$ color classes where each color class is a subset of a point-pencil. Theorem \ref{ddd} below implies that every coloring with at most $\theta_3$ color classes has the same structure of Example \ref{ex24}. For the proof of \ref{ddd}, we use the following result.

\begin{theorem}[\cite{blokhuis2}]\label{blokhuis}
The independence number of $qK_{5;\{2,4\}}$ is $e_0:=\theta_3\theta_2$ and every independent set of $qK_{5;\{2,4\}}$ that is not contained in a point-pencil has at most $e_1:=2q^4+3q^3+4q^2+2q+1$ elements.
\end{theorem}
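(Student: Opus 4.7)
First I would decode the adjacency of $qK_{5;\{2,4\}}$. Writing a flag of type $\{2,4\}$ as $(L,S)$ with $L\subset S$, a dimension count in the $5$-dimensional $V$ rules out $L+L'=V$, $L\cap S'=0$, $L'\cap S=0$ and $S\cap S'=0$, so two flags are adjacent (in general position) iff $L\cap L'=0$, $L\not\subset S'$, $L'\not\subset S$ and $S\neq S'$. Equivalently, two flags coexist in an independent set precisely when their lines meet, or one line is contained in the other solid, or they share the solid. The point-pencil with base point $P$ (all flags with $P\subset L\subset S$) is an independent set of size $\theta_3\theta_2=e_0$, which gives the lower bound.

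For the matching upper bound I would apply the Hoffman/Delsarte ratio bound in the commutative association scheme of flags of type $\{2,4\}$ in $V$. The eigenvalues of the graph can be extracted by a standard $q$-analogue computation in the Hecke algebra attached to the parabolic of $\mathrm{GL}_5(q)$ stabilizing such a flag; the ratio bound then delivers $\alpha(qK_{5;\{2,4\}})\le e_0$, and equality pins the characteristic vector of the independent set into the eigenspace spanned by indicator functions of point-pencils. This forces the independent set to sit inside a single point-pencil.

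For the Hilton--Milner statement, let $\mathcal I$ be a maximal independent set with $|\mathcal I|>e_1$ that is not contained in any point-pencil, aiming for a contradiction, and project $\mathcal I$ onto its set of appearing solids. Either some solid $S_0$ carries many lines of $\mathcal I$, in which case every other flag $(L,S)\in\mathcal I$ must satisfy $L\subset S_0$ or meet every such dominant line inside $S_0$; the resulting EKR-type condition on $2$-spaces of $S_0$ either produces a point-pencil structure or bounds the excess. Or every solid carries few lines, in which case the set of lines used by $\mathcal I$ is a large pairwise-intersecting family of $2$-spaces in $\PG(4,q)$, and the classical Hilton--Milner result for such lines forces them through a common point $P$, again placing $\mathcal I$ in a point-pencil and contradicting the hypothesis.

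The main obstacle is squeezing the Hilton--Milner cutoff all the way down to $e_1=2q^4+3q^3+4q^2+2q+1$: several near-extremal configurations---solid-pencils augmented by a few flags, families obtained from the duality $\{2,4\}\leftrightarrow\{1,3\}$, and mixed constructions arising from the incidence geometry of $\PG(4,q)$---need to be eliminated by careful counting. This is the combinatorial heart of the argument in \cite{blokhuis2}.
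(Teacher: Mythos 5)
This statement is quoted by the paper from \cite{blokhuis2}; the paper itself gives no proof, so the comparison must be against the argument in that reference, which is a purely combinatorial classification of maximal cocliques (working with the dual point--plane flags of $\PG(4,q)$). Your adjacency decoding and the lower bound via point-pencils are correct, but the rest of the sketch has genuine gaps.

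First, the spectral upper bound is asserted, not established. You invoke the Hoffman--Delsarte ratio bound in the flag association scheme, but you never compute the eigenvalues of the ``opposite'' relation, never verify that the ratio bound actually yields $\theta_3\theta_2$, and never justify that the minimal eigenspace is spanned by indicators of point-pencils (the equality-case analysis is a substantive extra step even when the bound is tight). For flag Kneser graphs this is not automatic: the scheme has many relations, and it is not a given that the spectral bound reproduces $e_0$ exactly. Indeed \cite{blokhuis2} does not use the ratio bound at all; it proceeds by direct geometric counting and classification.

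Second, the Hilton--Milner sketch contains a flawed step. In an independent set of $qK_{5;\{2,4\}}$ two flags $(L,S)$ and $(L',S')$ are allowed to have $L\cap L'=0$, provided $L\subset S'$, or $L'\subset S$, or $S=S'$. So ``every solid carries few lines'' does \emph{not} imply that the lines occurring in $\mathcal I$ form a pairwise-intersecting family, and the classical Hilton--Milner theorem for $2$-spaces cannot be invoked. The case analysis therefore does not isolate the intersecting-family situation cleanly, and the bound $e_1=2q^4+3q^3+4q^2+2q+1$ is never actually derived. You acknowledge this yourself (``the combinatorial heart of the argument in \cite{blokhuis2}''), which confirms that the decisive part of the proof is absent. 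As it stands the proposal is a plausible outline of one possible attack, not a proof, and it would need the eigenvalue computation for the first claim and a corrected, carried-out geometric case analysis for the second.
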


Our main result in this section is the following theorem.

\begin{theorem}\label{ddd}
Suppose that $\C$ is a covering of the vertices $qK_{5;\{2,4\}}$ consisting of $q^3+q^2+q+1$ maximal independent sets. Then $\C$ consists of all point-pencils with base point contained in a given $4$-dimensional space.
\end{theorem}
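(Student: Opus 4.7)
Write $\C=\{C_1,\ldots,C_N\}$ with $N=\theta_3=q^3+q^2+q+1$. Since point-pencils are themselves maximal independent sets, Theorem~\ref{blokhuis} forces each $C_i$ to be either a point-pencil $\FFF(P_i)$, of size $e_0:=\theta_3\theta_2$, or to satisfy $|C_i|\le e_1=2q^4+3q^3+4q^2+2q+1$. Let $a$ be the number of point-pencils in $\C$, set $b:=N-a$, and put $B:=\{P_1,\ldots,P_a\}\subset\PG(4,q)$. The proof reduces to showing $b=0$ and then that $B$ equals the set $[T]$ of $1$-subspaces of some $4$-dimensional subspace $T$ of $V$.

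The heart of the argument is a two-sided estimate for the number of ``holes'' $|\Lambda_0|$, where $\Lambda_0$ denotes the set of $2$-subspaces $\ell$ of $V$ with $\ell\cap B=\emptyset$ (projective lines of $\PG(4,q)$ disjoint from $B$). A flag $(\ell,S)$ is covered by some point-pencil of $\C$ precisely when $\ell\cap B\ne\emptyset$; so the $\theta_2|\Lambda_0|$ flags with line in $\Lambda_0$ must be covered by the $b$ non-point-pencil members of $\C$. Using the identity $e_1=\theta_2(2q^2+q+1)$, this yields the upper bound $|\Lambda_0|\le b(2q^2+q+1)$. Conversely, every $P\in\PG(4,q)\setminus B$ lies on at most $|B|=a$ distinct lines meeting $B$ (distinct points of $B$ yield distinct lines through $P$ unless collinear with $P$), so at least $\theta_3-a=b$ of the $\theta_3$ lines through $P$ lie in $\Lambda_0$. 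Summing over the $\theta_4-a=q^4+b$ points outside $B$ and dividing by $q+1$ (the number of points on each line of $\Lambda_0$) gives
\[
|\Lambda_0|\ \ge\ \frac{(q^4+b)b}{q+1}.
\]

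If $b\ge 1$, combining the two estimates produces $q^4+b\le(q+1)(2q^2+q+1)=2q^3+3q^2+2q+1$. This is false for $q\ge 4$, so $b=0$ follows immediately in that range. The case $q=3$, where the inequality still permits $b\in\{1,\ldots,7\}$, is the main technical obstacle. To handle it one sharpens the lower bound on $|\Lambda_0|$: equality would force every line meeting $B$ in two or more points to be contained in $B$, equivalently $B$ to be a projective subspace of $\PG(4,3)$; but the cardinality of any such subspace lies in $\{1,4,13,40,121\}$, whereas $|B|=40-b\in\{33,\ldots,39\}$. A quantitative version of this gap (counting the extra lines of $\Lambda_0$ forced by each secant of $B$ not contained in $B$, and if necessary invoking the classification of large non-point-pencil maximal independent sets from \cite{blokhuis2}) then rules out the remaining values of $b$.

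Once $b=0$, the set $B$ consists of $\theta_3$ points of $\PG(4,q)$ meeting every line, so by the Bose--Burton theorem $B$ coincides with the point set of some $4$-dimensional vector subspace $T$ of $V$. Consequently $\C=\{\FFF(P):P\subset T\}$, which is the conclusion of the theorem.
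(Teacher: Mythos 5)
Your proof of the main case is essentially the paper's, reorganized: you count the lines $\Lambda_0$ missing $B$ directly, while the paper counts the complementary set $\LL$ of lines meeting $M$ via a double-count and a Cauchy--Schwarz-type bound; unwinding both yields the same inequality $\frac{(q^4+b)b}{q+1}\theta_2 \le b\,e_1$. For $q\ge 4$ you conclude $b=0$, then apply Bose--Burton to identify $B$ as a solid, while the paper uses the equality case $c_L\in\{1,q+1\}$ to the same end. Both routes are correct and equally short here.

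The genuine gap is $q=3$. Your inequality gives only $b\le 7$, and your sketch for finishing is not a proof. Writing $\lambda_i$ for the number of lines meeting $B$ in exactly $i$ points, one has
\[
|\Lambda_0|=\lambda_0=\frac{(81+b)b}{4}+\frac{\lambda_2+\lambda_3}{2},
\]
so to contradict the upper bound $\lambda_0\le 25b$ you must show $\lambda_2+\lambda_3>\frac{19b-b^2}{2}$, which for $b=1$ already requires $\lambda_2+\lambda_3>9$ and for $b=7$ requires $\lambda_2+\lambda_3>42$. Your observation that $B$ (of size $33,\dots,39$) cannot be a subspace of $\PG(4,3)$ only yields $\lambda_2+\lambda_3\ge 1$, which is far from enough; "a quantitative version of this gap" is exactly the missing content, and it is not obvious how to extract the needed lower bound on $\lambda_2+\lambda_3$ purely from $|B|$. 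The paper instead runs a structural argument specific to $q=3$: it locates a line with all $q+1$ points in $M$, then a $3$-space with $\ge 7$ points of $M$, then a $4$-space $T$ with $t\ge 14$ points of $M$, bounds $|\LL|\le 130+27t+(|M|-t)(40-t)$, and pushes this through the covering count to force $x=0$, $t=40$. Your alternative hedge of "invoking the classification of large non-point-pencil maximal independent sets from \cite{blokhuis2}" would be a different route than the paper takes (the paper uses only the bound $e_1$, not the classification), but you do not carry it out either, so as written the theorem is established only for $q\ge 4$.
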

\begin{proof}
From Theorem \ref{blokhuis} we have $|F|=e_0$ or $|F|\le e_1$ for each $F\in\C$. Moreover, $|F|=e_0$ implies $F=\FFF(P)$ for some $1$-dimensional subspace $P$. Let $M$ be the set of $1$-dimensional subspaces $P$ of $V$ with $\FFF(P)\in\C$. Let $\LL$ be the set of $2$-dimensional subspaces of $V$ that contain at least one subspace of $M$. For $L\in\LL$ we denote by $c_L$ the number of $1$-dimensional subspaces of $M$ that are contained in $L$. By double counting the flags $(P,L)$ with $P\in M$ and $L\in\LL$ we find
\begin{align*}
\sum_{L\in\LL}c_L=|M|\theta_3,
\end{align*}
since each $1$-dimensional subspace lies in $\theta_3$ subspaces of dimension 2. Next, we double count all triples $(P,P',L)\in M\times M\times\LL$ with $L=P+P'$. Since any two distinct $1$-dimensional subspaces of $M$ generate a $2$-dimensional space, we find
\begin{align*}
\sum_{L\in\LL}c_L(c_L-1)&=|M|(|M|-1).
\end{align*}
For $L\in\LL$ we have $c_L\le q+1$ with equality if all $1$-dimensional subspaces
of $L$ belong to $M$. It follows that
\begin{align*}
|\LL|=\sum_{L\in\LL}c_L-\sum_{L\in\LL}(c_L-1)\le |M|\theta_3-\frac{|M|(|M|-1)}{q+1}
\end{align*}
with equality if and only if $c_L\in\{1,q+1\}$ for all $L\in\LL$. Since the number of subspaces of dimension $3$ that contain a 2-dimensional subspace is $\theta_2$, the union of all sets $\FFF(P)$ with $P\in M$ contains $|\LL|\theta_2$ flags of type $\{2,4\}$. If we put $x:=\theta_3-|M|$, then $\C$ contains $x$ independent sets of cardinality at most $e_1$ and hence we have
\begin{align}\label{eqn_xbound2}
    \left|\bigcup_{F\in\C}F\right|\le \left(|M|\theta_3-\frac{|M|(|M|-1)}{q+1}\right)\theta_2+xe_1.
\end{align}
Since the union of all independent sets in $\C$ is the set of all  flags of type $\{2,4\}$ and thus has cardinality ${\gauss52}_q\theta_2$, it follows that (use $|M|=\theta_3-x$)
\begin{align}\label{eqn_xbound}
\frac{x^2+xq^4}{q+1}\cdot\theta_2\le xe_1.
\end{align}
First, consider the case when $q\ge 4$. Then (\ref{eqn_xbound}) implies $x=0$ and we have equality in our estimates. Hence $c_L\in\{1,q+1\}$ for all $L\in \LL$. That is, each $L\in \LL$ has the property that either one or all $1$-subspaces of $L$ belong to $M$. This implies that the union of all $1$-subspaces of $M$ is itself a subspace. Since it contains $|M|=q^3+q^2+q+1$ subspaces of dimension $1$, this subspace has dimension $4$ and we are done.

Now, suppose that $q=3$. Then (\ref{eqn_xbound}) shows $x\le 7$ and thus $|M|\ge 33$.
It is not possible that $c_L\le q$ holds for all $L\in\LL$, since otherwise we could improve the bound (\ref{eqn_xbound2}) by replacing $q+1$ in the denominator by $q$ and this improved bound would yield a contradiction. Hence, there exists some $L\in \LL$ with $c_L=q+1=4$. Each of the remaining $|M|-4\ge 29$ elements of
 $M$ spans a $3$-space with $L$. Since the number of 3-spaces on $L$ is 13, it follows that there exists a {subspace $S$, with $\dim(S)=3$,} (on $L$) that contains at least $4+3=7$ elements of $M$. Similarly, since $|M|\ge 33=26+7$, then one of the four $4$-dimensional subspaces on $S$ contains at least $7+\lfloor \frac{26}{4}\rfloor=14$ elements of $M$. We let $T$ be a $4$-dimensional subspace on $S$ which contains at least $t\ge 14$ elements of $M$. Then the number of subspaces of dimension $2$, that contain one of these $t$ subspaces is at most $130+27t$ with equality only if all $130$ subspaces of dimension $2$ of $T$ belong to $\LL$. If $P\in M$ with $P\not\subseteq T$, then $t$ of the $40$ subspaces of dimension $2$ on $P$ contain an element of $M$ that is contained $T$. It follows that
\[
|\LL|\le 130+27t+(|M|-t)(40-t).
\]
The union of the independent sets $\FFF(P)$ with $P\in M$ is $|\LL|\theta_2$. Since the remaining $x$ independent sets of $\C$ each contain at most $e_1$ flags, and since the total number of $\{2,4\}$ flags is
${\gauss52}_q\theta_2$, it follows that
\begin{align*}
    {\Gauss52}_q\theta_2\le|\LL|\theta_2+xe_1\le(130+27t+(40-x-t)\cdot (40-t))\theta_2+xe_1.
\end{align*}
Using $e_1=(2q^2+q+1)\theta_2$, we can divide by $\theta_2$ and find
\begin{align}\label{eqn_xboundlast}
0\le (t-14) (t+x-39)-4x-26.
\end{align}
Since $14\le t\le |M|=40-x$, it follows first that $t>39-x$, that is $t=|M|=40-x$. Then (\ref{eqn_xboundlast}) gives $0\le -5x$ and hence $x=0$, $t=40$ and $|M|=40$. This implies that $\C$ consists of the sets $\FFF(P)$ for the $40$ subspaces $P$ of dimension 1 of $T$.
\end{proof}

Theorem \ref{main1} follows from Theorem \ref{ddd}.

\section{Colorings of the Kneser-graph $qK_{5;\{2,3\}}$}\label{sec_colorings}

We will now switch to projective language. The vector space $V$ of dimension $5$ over $GF(q)$ and its subspaces correspond to the projective space $\PG(4,q)$ and its subspaces. We remark that a subspace of $V$ of dimension $r$ has projective dimension $r-1$. Subspaces of projective dimension $0$, $1$, $2$ and $3$ will be called \emph{points}, \emph{lines}, \emph{planes} and \emph{solids}, respectively. A flag of type $\{2,3\}$ of $V$ corresponds to a \emph{line-plane flag} of $\PG(4,q)$. Hence, it is a set $\{\ell,\pi\}$ of a line $\ell$ and a plane $\pi$ with $\ell$ contained in $\pi$. Two flags $\{\ell,\pi\}$ and $\{\ell^\prime,\pi^\prime\}$ are adjacent in $qK_{5;\{2,3\}}$ if and only if the flags are in general position in $\PG(4,q)$. This means $l\cap\pi'=\emptyset=l'\cap\pi$ and also implies that $\pi\cap\pi^\prime$ is a point. In projective language, an independent set of the Kneser graph is a set of line-plane flags that are mutually in general position. Here, too, independent sets will be called Erd\H os-Ko-Rado sets of line-plane flags, in short, \emph{EKR-sets}. Thus, the chromatic number of the Kneser graph $qK_{5;\{2,3\}}$ is the smallest number of EKR-sets whose union comprises all line-plane flags.

\textsc{Notation}. Although a flag is a set, we will write flags $\{S,T\}$ of cardinality two of projective spaces as ordered pairs $(S,T)$ where $\dim(S)<\dim(T)$.
\medskip

Point-pencils of line-plane flags are EKR-sets. However, these are not maximal and, as mentioned in the introduction, contained in more than one maximal EKR-set, as we shall see below.

\begin{examples}[EKR-sets]\label{voorbeeld}
Let $\cal M$ be the set of all line-plane flags of $\PG(4,q)$.
For point-line flags $(P,\ell)$, point-solid flags $(P,S)$, plane-solid flags $(\tau,S)$ and point-solid flags $(P,S)$ we define the EKR-sets
\begin{align*}
\FFF(P,\ell)&:=\{(h,\pi)\in {\cal M}\mid P\in h \text{\ or\ }\ell\subset \pi\},\\
\FFF(P,S)&:=\{(h,\pi)\in {\cal M}\mid P\in h \text{\ or\ }P\in \pi\subset S\},\\
\FFF(S,\tau)&:=\{(h,\pi)\in {\cal M}\mid \pi\subset S \text{\ or\ }h\subset \tau\},\\
\FFF(S,P)&:=\{(h,\pi)\in {\cal M}\mid \pi\subset S \text{\ or\ }P\in h\subset S\}.
\end{align*}
Let $F$ be one of the examples above. In the first two cases we call $\FFF(P):=\{(h,\pi)\in {\cal M}\mid P\in h\}$  the \emph{generic part} and $F\setminus \FFF(P)$ the \emph{special part} of $F$. In the remaining two cases, we call $\FFF(S):=\{(h,\pi)\in {\cal M}\mid \pi\subset S\}$ the \emph{generic part} and $F\setminus \FFF(S)$ the \emph{special part} of $F$. 
\end{examples}


Notice that examples 1 and 3 as well as 2 and 4 are dual to each other. Also all four examples have cardinality
\[
e_0:=\Gauss{4}{3}_q\cdot\Gauss{3}{2}_q+q^2\Gauss{3}{1}_q=\theta_2(\theta_3+q^2)
\]
and their special parts have cardinality $q^2\theta_2$. It was shown in \cite{blokhuis1} that these  examples are the largest EKR-sets of line-plane flags in $\PG(4,q)$. We reformulate their result as follows.

 \begin{theorem}[{\cite[Proposition 2.1]{blokhuis1}}]\label{blokuise0e1}
 	Let $\mathcal{F}$ be an EKR-set of line-plane flags of $\PG(4,q)$. Then $|\mathcal{F}|\leq e_0$ and equality occurs if and only if $\mathcal{F}$ is one of the sets defined in Examples \ref{voorbeeld}.
 \end{theorem}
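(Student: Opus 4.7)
The plan is to prove the upper bound $|\mathcal{F}|\le e_0$ by double counting the line-shadow and plane-shadow of $\mathcal{F}$, and then to analyse the equality case via reduction to the classical EKR theorem for lines in a projective space of smaller dimension. For each line $\ell$ of $\PG(4,q)$ set $\Pi_\ell:=\{\pi:(\ell,\pi)\in\mathcal{F}\}$ and for each plane $\pi$ set $L_\pi:=\{\ell\subset\pi:(\ell,\pi)\in\mathcal{F}\}$, so that
\begin{equation*}
|\mathcal{F}|=\sum_\ell|\Pi_\ell|=\sum_\pi|L_\pi|,
\end{equation*}
with trivial bounds $|\Pi_\ell|\le\theta_2$ and $|L_\pi|\le\theta_2$. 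The central translation of the EKR condition I rely on is: if $\ell_1$ and $\ell_2$ are skew lines, then either every plane of $\Pi_{\ell_1}$ meets $\ell_2$, or every plane of $\Pi_{\ell_2}$ meets $\ell_1$; otherwise a witness from each side would give two flags in general position, contradicting the EKR property.

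For the upper bound I would split $\mathcal{F}$ according to the shadow behaviour. Pick a line $\ell_0$ with $|\Pi_{\ell_0}|$ maximal. If $|\Pi_{\ell_0}|=\theta_2$ then every plane through $\ell_0$ appears. For a line $\ell$ skew to $\ell_0$, the skew-line dichotomy forces every plane of $\Pi_\ell$ to lie in the solid $S=\ell_0+\ell$; hence $|\Pi_\ell|\le q+1$. The lines $\ell$ meeting $\ell_0$ contribute the bulk of the count, with the trivial bound $|\Pi_\ell|\le\theta_2$. A careful count organised by the $q+1$ points of $\ell_0$, and using the classical fact that intersecting lines in the residual $\PG(3,q)$ at a fixed point satisfy the EKR bound $\theta_2$, should sum precisely to $\theta_2\theta_3+q^2\theta_2=e_0$. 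If $|\Pi_{\ell_0}|<\theta_2$, a Cauchy--Schwarz-type estimate on $\sum_\ell|\Pi_\ell|^2$ weighted by the number of planes on each line gives a strictly smaller bound.

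For the equality case, tightness in the preceding argument forces one of two symmetric regimes: either there is a point $P$ such that every flag of $\mathcal{F}$ either satisfies $P\in\ell$ or else lies in a tightly constrained special part, or, dually, there is a solid $S$ playing the same role for planes. In the first regime the generic contribution is the full point-pencil through $P$ of size $\theta_3\theta_2$, and the remaining $q^2\theta_2$ special flags must satisfy the EKR condition with every generic flag. A short analysis then shows that their planes must share a common line through $P$ (yielding $\FFF(P,\ell)$) or jointly lie in a common solid through $P$ (yielding $\FFF(P,S)$). The dual regime produces $\FFF(S,\tau)$ and $\FFF(S,P)$. The main obstacle I anticipate is ruling out hybrid extremisers in which no single point or solid governs all of $\mathcal{F}$: to exclude these I would need a delicate local-to-global argument exploiting the rigidity of the skew-line dichotomy precisely when it is attained with equality, essentially a small Hilton--Milner-type stability statement inside $\PG(4,q)$.
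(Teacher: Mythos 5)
First, be aware that the paper does not supply a proof of this statement: Theorem~\ref{blokuise0e1} is imported verbatim as \cite[Proposition~2.1]{blokhuis1}, and the only proof material in the paper that touches \cite{blokhuis1} is the appendix, which adapts their arguments to extract the \emph{secondary} bound $e_1$, not the extremal bound $e_0$. So there is no ``paper's proof'' to compare against. What the appendix does reveal is that the argument in \cite{blokhuis1} is organised around a colouring of \emph{planes} (``red'' and ``yellow'' planes, distinguished by the shape of their line-shadow $L_\pi$ and by their relation to a distinguished plane $A_0$), which is a plane-centred case analysis rather than the line-shadow double count you propose; your route is therefore not a reconstruction of theirs.

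Assessed on its own, your sketch has real gaps. The skew-line dichotomy is correct: if some $\pi_1\in\Pi_{\ell_1}$ misses $\ell_2$, then the EKR condition forces every $\pi_2\in\Pi_{\ell_2}$ to meet $\ell_1$, and hence $|\Pi_\ell|\le q+1$ for every $\ell$ skew to a reference line $\ell_0$ with full shadow. But the count does not close. There are $q^4\theta_2$ lines of $\PG(4,q)$ skew to $\ell_0$, so applying $|\Pi_\ell|\le q+1$ line by line bounds the skew contribution only by $q^4\theta_2(q+1)\sim q^7$, overshooting $e_0\sim q^5$ by a factor of roughly $q^2$; likewise the trivial bound $|\Pi_\ell|\le\theta_2$ over the $(q+1)(\theta_3-1)=q(q+1)\theta_2$ lines meeting $\ell_0$ already contributes $\sim q^6$. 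The claim that a ``careful count organised by the $q+1$ points of $\ell_0$'' lands exactly on $e_0$ therefore conceals the actual work: one must exploit the EKR constraints among the skew lines themselves, and among lines through distinct points of $\ell_0$, and your sketch does not say how. The Cauchy--Schwarz step for $|\Pi_{\ell_0}|<\theta_2$ is left unspecified (no bound on $\sum_\ell|\Pi_\ell|^2$ is derived from the EKR property), and you explicitly flag the missing stability argument needed to rule out ``hybrid'' extremisers. Each of these is a genuine gap, so the sketch as written is not a proof of either the bound or the classification.
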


As we explain in the appendix, it was essentially shown in \cite{blokhuis1} that every EKR-set of line-plane flags of $\PG(4,q)$, which is not a subset of one of the sets defined in Examples \ref{voorbeeld}, has cardinality at most
\[
e_1:=4q^4+9q^3+4q^2+q+1.
\]

\begin{examples}[Colorings of $qK_{5;\{2,3\}}$]\label{colorings}
Let $S$ be a solid of $\PG(4,q)$.
\begin{enumerate}[label=\arabic*)]
\item
    Consider a set $W$ of $q$ points of $S$ and suppose that there is a map $\nu$ from $S\setminus W$ to the set of lines of $S$ such that $P\in\nu(P)$ for all $P\in S\setminus W$ and such that every line of $S$ that meets $W$ lies in the image of $\nu$. Then $\{\FFF(P,\nu(P))\mid P\in S\setminus W\}$ is a set of EKR-sets whose union is the set of all line-plane flags of $\PG(4,q)$.

    We provide examples of a set $W$ and a map $\nu$ satisfying these conditions:
    \begin{enumerate}[label=(\alph*)]
    \item\label{example_on_a_line}
        Suppose that $W$ is a set of $q$ points $P_1,\dots,P_q$ which lie on a common line $\ell$ and let $P_0$ be the last remaining point of $\ell$. For each plane $\pi$ of $S$ on $\ell$, fix a numbering $\ell_1(\pi),\dots,\ell_q(\pi)$ of the lines $\not=\ell$ of $\pi$ on $P_0$. Define the map $\nu$ from the set $S\setminus W$ to the line-set of $S$ by $\nu(P_0)=\ell$ and $\nu(P):=PP_i$, if $P\notin\ell$ and $P\in \ell_i(\langle P,\ell \rangle)$.
    \item
        Suppose that $W$ is a set of $q$ points in a plane $\pi$. If such a map $\nu$ is defined on $\pi\setminus W$, then one can extend it to $S$ as follows: The $q$ points in $W$ meet at most $q(q+1)$ lines of $\pi$ and thus there is at least one line $g\le\pi$ which does not meet the set $W$. Let $\pi_1,\dots,\pi_q$ be the planes on $g$ in $S$ different from $\pi$ and for all $i\in\{1,\dots,q\}$ and all $P\in\pi_i\setminus\pi$ set $\nu(P):=PP_i$. 

        Obviously, one can define such a map $\nu$ on a plane $\pi\setminus W$ if $W$ only spans a line therein, because then the construction in \ref{example_on_a_line} can be used. However, one can also find such a map $\nu$ if $W$ spans the plane $\pi$ and we give a simple construction in the case where $q-1$ points $P_1,\dots,P_{q-1}$ of $W$ lie on a common line $\ell_0$ and the last point $P_q$ of $W$ satisfies $\pi=\langle P_q,\ell_0\rangle$. We let $Q_0$ and $Q_1$ be the two remaining points of $\ell_0$ and we fix a numbering $\ell_1,\dots,\ell_q$ of the lines different from $\ell_0$ of $\pi$ on $Q_0$, such that $\ell_q=Q_0P_q$. Then, for all $i\in\{1,\dots,q-1\}$ and all $P\in\ell_i\setminus\{Q_0\}$ we set $\nu(P):=PP_i$. Furthermore, we set $\nu(Q_0):=\ell_0$, $\nu(Q_1):=Q_1P_q$ and for all $P\in\ell_q\setminus\{Q_0,P_q\}$ we set $\nu(P):=\ell_q$. 
    \end{enumerate}
\item
    Finally, we give an example which also uses EKR-sets with special part coming from a solid, that is, EKR-sets $\FFF(P,S)$ for a point-solid flag $(P,S)$.

    Here, suppose again that the $q$ points $P_1,\dots,P_q$ of $W$ only span a line $\ell$ of $S$ and let $P_0$ be the last remaining point of $\ell$. For any plane $\pi$ with $\ell\le\pi\le S$ fix a numbering $\ell_1(\pi),\dots,\ell_q(\pi)$ of the lines of $\pi$ on $P_0$ different from $\ell$  as well as a numbering $S_1(\pi),\dots,S_q(\pi)$ of the solids on $\pi$ different from $S$ and put
    \begin{align*}
        \F_1(\pi)&:=\bigcup_{i=1}^q\{\FFF(P,PP_i)\mid P_0\not=P\in\ell_i(\pi)\},\\
        \F_2(\pi)&:=\bigcup_{i=1}^q\{\FFF(P,S_i(\pi))\mid P_0\not=P\in\ell_i(\pi)\}.
    \end{align*}
    Now, let $\Pi$ be the set consisting of all planes of $S$ that contain $\ell$ and for every subset $R$ of $\Pi$ put
    \[
        \F(R):=\{\FFF(P_0,\ell)\}\cup\bigcup_{\mathclap{\pi\in R}}\F_1(\pi)\cup\bigcup_{\mathclap{\pi\in \Pi\setminus R}}\F_2(\pi).
    \]
    Then for all $R\subseteq\Pi$ the set $\F(R)$ consists of $\theta_3-q$ EKR-sets whose union is the set of all line-plane flags.
\end{enumerate}
\end{examples}

This list of examples is not a complete list of all colorings with $\theta_3-q$ colors. For example, one can also find colorings by replacing the EKR-sets in the colorings described above by their dual structure. However, this list shows that the chromatic number of $\Gamma$ is at most $\theta_3-q$ and provides several examples of different structures. We will prove in Section \ref{sec_line-plane} that the chromatic number is in fact equal to $\theta_3-q$, provided $q$ is large enough.

\section{A lemma on point sets}\label{sec_lemma-on-pointset}
\begin{lemma}\label{solid1}
 Suppose that $M$ is a set of points in $\PG(4,q)$, and $P_1,P_2,P_3$ are three non-collinear points such that the plane $\pi$ they span has no point in $M$. Let $m$, $n$ and $d$ be positive real numbers such that the following hold:
\begin{itemize}
\item Each of the points $P_1,P_2,P_3$ lies on at most $nq^2$ lines that meet $M$,
\item $|M|=dq^3$,
\item $q>32n^5m/d^5$.
\end{itemize}

Then there exists a solid $S$ on $\pi$ with $|S\cap M|\ge mq^2$.
\end{lemma}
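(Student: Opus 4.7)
The plan is to argue by contradiction. Assume that every solid $S$ through $\pi$ satisfies $|M\cap S|<mq^2$, and index the $q+1$ solids through $\pi$ as $S_0,\dots,S_q$. They partition $\PG(4,q)\setminus\pi$, so writing $m_j:=|M\cap S_j|$ we have $\sum_j m_j=dq^3$. For each $i\in\{1,2,3\}$ and $j$, let $a_{i,j}$ count the lines through $P_i$ that lie in $S_j$ but not in $\pi$ and meet $M$. Every line through $P_i$ meeting $M$ is disjoint from $\pi$ (since $M\cap\pi=\emptyset$) and hence lies in a unique solid through $\pi$, so the hypothesis gives $\sum_j a_{i,j}\le nq^2$, and the trivial bound $m_j\le q\cdot a_{i,j}$ holds for each $i$.

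The first nontrivial input is a structural inequality. Inside $S_j\cong\PG(3,q)$, the planes through the line $P_iP_k$ other than $\pi$ partition $S_j\setminus\pi$ into $q$ pieces of $q^2$ points each, and in every such plane $\sigma$ the $M$-points lie at the intersections of $P_i$-lines with $P_k$-lines, giving $|M\cap\sigma|\le a_{i,j,\sigma}\cdot a_{k,j,\sigma}$ with $a_{i,j,\sigma}\le q$. Summing over the $q$ planes and applying Cauchy--Schwarz yields $m_j^2\le q^2\,a_{i,j}\,a_{k,j}$; combining the three such pairs gives $m_j\le q(a_{1,j}a_{2,j}a_{3,j})^{1/3}$. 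Next comes a heavy-line count: for a threshold $T$ call a line through $P_i$ heavy if it meets $M$ in at least $T$ points, and collect these lines in $H_i$. Light lines through $P_i$ contribute at most $Tnq^2$ to $|M|$, so $|H_i|\cdot q\ge dq^3-Tnq^2$, that is $|H_i|\ge dq^2-Tnq$. Under the contradiction assumption the number $h_{i,j}$ of heavy $P_i$-lines in $S_j$ satisfies $Th_{i,j}\le m_j<mq^2$. Moreover, each heavy $P_i$-line of $S_j$ carries at least $T$ distinct $M$-points, each determining a $P_k$-line of $S_j$; since each $P_k$-line absorbs at most $q$ such points, $a_{k,j}\ge h_{i,j}T/q$, and summing over $j$ gives $|H_i|\le nq^3/T$.

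The remaining step is the delicate one: combining the bounds on $|H_i|$, the per-solid cap $h_{i,j}<mq^2/T$, the constraint $\sum_j a_{i,j}\le nq^2$ for each of the three points, and the structural inequality $m_j\le q(a_{1,j}a_{2,j}a_{3,j})^{1/3}$, one must derive $q\le 32n^5m/d^5$, contradicting the hypothesis. A single round of heavy-line/cross-direction reasoning (optimizing $T=dq/(2n)$) only yields $m\ge d^2/(8n)$, essentially a pigeonhole-plus-one improvement. The fifth powers in the hypothesis together with the constant $32=2^5$ strongly suggest that the full argument iterates this step five times, cycling through the three points $P_1,P_2,P_3$ and gaining a factor of $d/n$ per round with a loss of $2$; alternatively one might apply a higher-moment H\"older inequality that uses all three directional constraints simultaneously together with the per-solid cap. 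Setting up this iteration so that exactly the stated constants emerge -- and rigorously controlling the ``absorption'' losses when heavy $P_i$-lines feed into $P_k$-directions -- is what I expect to be the principal technical obstacle of the proof.
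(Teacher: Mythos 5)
Your proposal is incomplete: you explicitly concede at the end that you cannot close the argument and speculate about ``iterating five times'' or applying a higher-moment H\"older inequality. This is a genuine gap, and moreover the chosen decomposition is the wrong one. You slice $\PG(4,q)\setminus\pi$ into the $q+1$ solids through $\pi$; this is too coarse. The paper instead slices by the $q^2+q$ planes through the line $P_1P_2$ other than $\pi$. With that finer decomposition the roles of the three points separate cleanly and a \emph{single} round of counting finishes. Writing $x_j:=|\pi_j\cap M|$ for each such plane $\pi_j$, the grid argument you use gives $x_j\le a_{1j}a_{2j}$ (only $P_1,P_2$ are used here), hence $\sqrt{x_j}\le\tfrac12(a_{1j}+a_{2j})$ and $\sum_j\sqrt{x_j}\le nq^2$. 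Declare $\pi_j$ heavy when $x_j\ge cq^2$ with $c:=d^2/(4n^2)$; the light planes contribute at most $n\sqrt{c}\,q^3=\tfrac{d}{2}q^3$ to $|M|$, so the heavy set $R$ satisfies $|R|q^2\ge|M|-\tfrac{d}{2}q^3$. Now $P_3$ and the contradiction hypothesis enter: each plane $\pi_j$ lies in exactly one solid on $\pi$, and under the assumption $|S\cap M|<mq^2$ each such solid can contain at most $m/c$ heavy planes, so the heavy planes are distributed over at least $|R|c/m$ solids. Each of these solids contains some heavy $\pi_j$ with $\ge cq^2$ points of $M$, and $P_3\notin\pi_j$ lies in that solid, so $P_3$ sees at least $cq^2$ lines to $M$ inside it; lines through $P_3$ off $\pi$ determine the solid, so these counts are disjoint, giving $\tfrac{|R|c}{m}\cdot cq^2\le nq^2$. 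Combining the two bounds on $|R|$ yields $|M|\le\tfrac{d}{2}q^3+\tfrac{mn}{c^2}q^2$, hence $q\le 2mn/(dc^2)=32mn^5/d^5$, a contradiction. So the constant $32=2^5$ arises from $n\sqrt{c}=d/2$ and $1/c^2=16n^4/d^4$ in one pass, not from iterating. Your structural inequality $m_j\le q(a_{1,j}a_{2,j}a_{3,j})^{1/3}$ is correct but leads nowhere when summed over only $q+1$ solids; the idea you are missing is to refine the decomposition to planes through $P_1P_2$, use $P_1,P_2$ to classify planes as heavy or light, and then use the contradiction hypothesis to show the heavy planes spread across enough solids to overload $P_3$.
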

\begin{proof}
Let $\pi_j$, $1\le j\le q^2+q$ be the planes on the line $P_1P_2$ different from $\pi$, and for $i\in\{1,2\}$ and $j\in\{1,\dots,q^2+q\}$ let $a_{ij}$ be the number of lines of $\pi_j$ on $P_i$ that meet $M$. Then $x_j:=|\pi_j\cap M|\le a_{1j}a_{2j}$. This implies that $\sqrt{x_j}\le \frac{1}{2}(a_{1j}+a_{2j})$. Since each of $P_1$ and $P_2$ lies on at most $nq^2$ lines that meet $M$, it follows that
\begin{align*}
nq^2\ge\frac{1}{2}\sum_j(a_{1j}+a_{2j})\ge \sum_j\sqrt{x_j}.
\end{align*}
Put $R:=\{j\mid x_j\ge c q^2\}$ with $c=\frac{d^2}{4n^2}$. Then
\begin{align*}
nq^2\ge\sum_{j\notin R}\sqrt{x_j}\ge \frac{1}{\sqrt{c}q}\sum_{j\notin R}x_j\ge \frac{1}{\sqrt{c}q}(|M|-|R|q^2),
\end{align*}
since the sum of $x_j$ over all $j$ is $|M|$ and since each plane $\pi_j$ with $j\in R$ meets $M$ in at most $q^2$ points. It follows that
\begin{align*}
|R|q^2\ge |M|-nq^2\sqrt{c}q
\end{align*}
Assume the statement is not true, that is, every solid on $\pi$ meets $M$ in at most $mq^2$ points. Then every solid on $\pi$ contains at most $mq^2/cq^2$ planes $\pi_j$ with $j\in R$. Hence, the number of solids on $\pi$ that contain a plane $\pi_j$ with $j\in R$ is at least $|R|c/m$. This implies that $P_3$ lies on at least
\[
\frac{|R|c}{m}\cdot cq^2
\]
lines that meet $M$. Hence
\[
\frac{|R|c}{m}\cdot cq^2\le nq^2.
\]
Comparing this to the lower bound for $|R|$, we find
\[
(|M|-nq^2\sqrt{c}q)\frac{c}{m}\cdot c\le nq^2
\ \Longrightarrow\ |M|\le \sqrt{c}nq^3+\frac{mnq^2}{c^2}.
\]
Since $\sqrt{c}n=\frac{1}{2}d$, we find a contradiction to the hypotheses.
\end{proof}

\begin{remark*}
This lemma is the main reason why we can prove Theorem \ref{main2} only for very large values of $q$. In partiular, we will only need a very large value of $q$ when we apply this lemma, but the remaining arguments in the next section only require $q>379$.
\end{remark*}

\section{The chromatic number of $qK_{5;\{2,3\}}$}\label{sec_line-plane}
In this section we prove, for large values of $q$, that the chromatic number of $qK_{5;\{2,3\}}$ is $\theta_3-q$. Notice that we already know a coloring with this many colors, so we only have to show that one can not do better.
\medskip

\textsc{Notation}. We assume that $\F=\{F_1,\dots,F_{\theta_3-q}\}$ is a family of $\theta_3-q$ distinct non-empty EKR-sets of line-plane flags of $\PG(4,q)$ whose union consists of all line-plane flags of $\PG(4,q)$. We shall show that each element $F$ of $\F$ is necessary in the sense that there will be a flag that lies in $F$ but in no other member of $\F$. This implies that the chromatic number of the graph $qK_{5;\{2,3\}}$ is at least $\theta_3-q$, and hence it is exactly $\theta_3-q$.

We may (and will) assume that all EKR-sets of $\F$ are maximal. This implies that $|F|=e_0$ or $|F|\le e_1$ for all $F\in\F$, see Section \ref{sec_colorings}. We may also assume that no two members of $\F$ of size $e_0$ have the same generic part, since otherwise we replace one of the two EKR-sets by its special part, which has cardinality $q^2\theta_2<e_1$ (if this special part happens to already be a member of $\F$, then we can still add up to $e_1-q^2\theta_2$ flags to avoid this). According to Theorem \ref{blokuise0e1} every $F\in\F$ with $|F|=e_0$ is either based on a point $P$ and thus $F$ is equal to $\FFF(P,\ell)$ or $\FFF(P,S)$ for a line $\ell$ or a solid $S$ on $P$, or based on a hyperplane $S$ and thus is equal to $\FFF(S,\pi)$ and $\FFF(S,P)$ for a plane $\pi$ or a point $P$ of $S$. By passing to the dual space if necessary, we may assume that at least one half of these are based on a point. By $I$ we denote the set consisting of all $i\in \{1,\hdots,\theta_3-q\}$ such that $|F_i|=e_0$ and $F_i$ is based on a point. For $i\in I$ we denote the base point of $F_i$ by $P_i$, that is, $F_i=\FFF(P_i,X)$ with $X$ a line or a solid.

\begin{lemma}\label{delta}
The number of all line-plane flags of $\PG(4,q)$ is equal to
\begin{align*}
\Gauss{5}{3}_q\cdot\Gauss{3}{2}_q=|\F|e_0-(2q^7+3q^6+4q^5+3q^4+2q^3+q^2).
\end{align*}
\end{lemma}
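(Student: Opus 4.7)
The identity is purely computational. The plan is to evaluate both sides as polynomials in $q$ and check that they agree.

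The left-hand side is the number of line-plane flags of $\PG(4,q)$: choosing first a plane (a $3$-dimensional subspace of $V$) and then a line inside it (a $2$-dimensional subspace of that $3$-space) gives exactly $\Gauss{5}{3}_q\cdot\Gauss{3}{2}_q$ flags.

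For the right-hand side, I would substitute $|\F|=\theta_3-q$ and $e_0=\theta_2(\theta_3+q^2)$, and factor $\theta_2$ out of both sides. Counting the same flags by line-first instead (as $\Gauss{5}{2}_q\cdot\Gauss{3}{1}_q$) and using $\theta_3=(q+1)(q^2+1)$ gives $\Gauss{5}{3}_q\Gauss{3}{2}_q=\theta_2\theta_4(q^2+1)$. Since $\theta_2\cdot q^2(2q^3+q^2+q+1)$ expands to $2q^7+3q^6+4q^5+3q^4+2q^3+q^2$, the claim reduces to
\[
(\theta_3-q)(\theta_3+q^2)-\theta_4(q^2+1)=q^2(2q^3+q^2+q+1).
\]

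A direct expansion settles this reduced identity: $(\theta_3-q)(\theta_3+q^2)=q^6+3q^5+3q^4+3q^3+3q^2+q+1$ and $\theta_4(q^2+1)=q^6+q^5+2q^4+2q^3+2q^2+q+1$, whose difference is $2q^5+q^4+q^3+q^2=q^2(2q^3+q^2+q+1)$. There is no real obstacle — the lemma is a bookkeeping identity, and the only thing to watch is polynomial arithmetic. Its purpose, apparent from the notation, is to record the total flag count in the convenient form $|\F|e_0-\Delta$ with an explicit small $\Delta$; the slack between $e_0$ and the actual sizes $|F_i|$ can then be compared directly against this $\Delta$ in the forthcoming arguments.
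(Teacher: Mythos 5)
Your proof is correct and matches the paper's intent: Lemma \ref{delta} is stated without proof as a routine polynomial identity, and your direct verification (substituting $|\F|=\theta_3-q$ and $e_0=\theta_2(\theta_3+q^2)$, factoring out $\theta_2$, and expanding) is exactly the expected argument. All intermediate expansions — $\Delta=\theta_2 q^2(2q^3+q^2+q+1)$, $(\theta_3-q)(\theta_3+q^2)=q^6+3q^5+3q^4+3q^3+3q^2+q+1$, $\theta_4(q^2+1)=q^6+q^5+2q^4+2q^3+2q^2+q+1$ — check out.
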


\begin{lemma}\label{solid2}
Suppose that $q\ge 41$ and let $S$ be a solid. Denote by $c_1$ the number of indices $i\in I$ with $P_i\notin S$ and by $c_3$ the number of EKR-sets $F\in\F$ with $|F|\le e_1$. Then $(|I|-c_1)+c_3\le 5q^2$ or $c_1+c_3\le 4q^2$.
\end{lemma}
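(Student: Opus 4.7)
My plan is to argue by contradiction. Write $c_2:=|I|-c_1$ for the number of $i\in I$ with $P_i\in S$, and $c_4:=|\F|-|I|-c_3$ for the number of hyperplane-based $F\in\F$ of size $e_0$; note that $c_4\le|I|$ by the paragraph preceding this lemma. Assume for contradiction that both $c_2+c_3>5q^2$ and $c_1+c_3>4q^2$ hold.

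As a preliminary step, I would first bound $c_3$ from above. Since at most $c_3$ members of $\F$ have size $\le e_1$ and the rest have size exactly $e_0$, $\sum_{F\in\F}|F|\le|\F|\cdot e_0-c_3(e_0-e_1)$; and Lemma~\ref{delta} gives $\sum_{F\in\F}|F|\ge\Gauss{5}{3}_q\cdot\Gauss{3}{2}_q=|\F|\cdot e_0-\delta$, where $\delta=2q^7+3q^6+4q^5+3q^4+2q^3+q^2$. Hence $c_3\le\delta/(e_0-e_1)=2q^2+O(q)$.

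The heart of the argument is a double count over the set $\Phi:=\{(h,\pi)\in\M:h\subseteq S,\ \pi\not\subseteq S\}$ of line-plane flags, which has cardinality $|\Phi|=\Gauss{4}{2}_q\cdot q^2$ and is in bijection with the planes of $\PG(4,q)$ not contained in $S$ via $(h,\pi)\mapsto\pi$ (since the constraints force $h=\pi\cap S$). I would compute $|F\cap\Phi|$ by type: for $i\in I$ with $P_i\in S$, $|F_i\cap\Phi|=q^2\theta_2$, contributed entirely by the generic part $\FFF(P_i)$---the special part contributes nothing, since any flag $(h,\pi)\in\Phi$ in the special part would need $P_i\notin h$, while $P_i\in\pi$ together with $P_i\in S$ forces $P_i\in\pi\cap S=h$, a contradiction. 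For $i\in I$ with $P_i\notin S$, $|F_i\cap\Phi|=\theta_2$, entirely from the special part. For a hyperplane-based $F$ of size $e_0$ with base solid $T$, a short case analysis on whether $T=S$ or $T\ne S$ (and on the auxiliary datum $\tau$ or $P$) gives $|F\cap\Phi|\le q^2\theta_2$. For $F$ with $|F|\le e_1$, the trivial bound $|F\cap\Phi|\le e_1$ applies.

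Combining these estimates with $\sum_{F\in\F}|F\cap\Phi|\ge|\Phi|$ and the overcount bound $\sum_{F\in\F}|F\cap\Phi|\le|\Phi|+\delta$ yields two inequalities relating $c_1,c_2,c_3,c_4$. A second double count over $\Phi':=\{(h,\pi)\in\M:\pi\subseteq S\}$ of cardinality $\theta_3\theta_2$---on which only the $F_i$ with $P_i\in S$ and the hyperplane-based $F$ contribute, so that the dependence on $c_1$ disappears---yields two further inequalities of complementary shape. Together with the preliminary bound on $c_3$, with $c_4\le|I|=c_1+c_2$, and with $|\F|=\theta_3-q$, I plan to derive the required contradiction. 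The main obstacle is extracting the precise thresholds $5q^2$ and $4q^2$: the asymmetry should arise from the different roles of $c_1$ in the two counts and from different coefficients of $c_3\cdot e_1$ and of the hyperplane contributions in each. The hypothesis $q\ge 41$ should absorb the lower-order corrections coming from $e_1/\theta_2$ and $\delta/(q^2\theta_2)$.
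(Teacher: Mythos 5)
Your individual estimates of $|F\cap\Phi|$ are, as far as I can check, correct: $q^2\theta_2$ for point-based $F$ with $P_i\in S$ (the special part really does contribute nothing, by the argument you give), $\theta_2$ for point-based $F$ with $P_i\notin S$, at most $q^2\theta_2$ for hyperplane-based $F$ of size $e_0$, and at most $e_1$ for the rest; the preliminary bound $c_3\lesssim 2q^2$ is also sound (the paper proves $c_3\le 3q^2$ for $q\ge 13$ in a similar way). The gap is structural, and I do not think it can be repaired inside the framework you describe: a plain double count $\sum_{F\in\F}|F\cap\Phi|$ (and likewise over $\Phi'$) produces only \emph{linear} inequalities in $c_1,c_2,c_3,c_4$, while the disjunction you must prove is an intrinsically \emph{quadratic} statement. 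Concretely, write out your $\Phi$-inequality using $c_2+c_4=(\theta_3-q)-c_1-c_3$ and $|\Phi|=(q^2+1)q^2\theta_2$: the $\ge|\Phi|$ direction reduces to $c_1(q^2-1)\theta_2\le q^5\theta_2+c_3(e_1-q^2\theta_2)$, i.e.\ $c_1\lesssim q^3+O(c_3)$, which is true for every admissible family because $c_1\le|\F|$. The $\le|\Phi|+\delta$ direction gives $c_2q^2\theta_2+c_1\theta_2\le(q^2+1)q^2\theta_2+\delta$, and since $\delta\approx 2q^7$ this is satisfied by a factor of about two. The $\Phi'$-count is weaker still. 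One can check directly that the point $c_1=c_2=\tfrac13q^3$, $c_3=0$, $c_4=\theta_3-q-\tfrac23q^3$ satisfies every linear constraint you list (including $c_4\le c_1+c_2$ and the $\Phi$-, $\Phi'$-bounds) yet has both $c_2+c_3\gg 5q^2$ and $c_1+c_3\gg 4q^2$; so no linear combination of these constraints can yield the lemma.

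What the paper's proof does differently is to estimate the size of the \emph{union} $\bigcup_{i\in I}F_i$ by explicitly subtracting pairwise overlaps: for each $a$ with $P_a\in S$ and each $i$ with $P_i\notin S$, the pencils $F_a$ and $F_i$ share the $\theta_2$ flags whose line is $P_aP_i$, and these overlap sets are pairwise disjoint as $a$ ranges over $A$. This produces the cross term $c_1(|I|-c_1)\theta_2=c_1c_2\theta_2$ in inequality (\ref{diff1}). That quadratic term is exactly what is needed: under the negated conclusion the product $(c_1+c_3-4q^2)(c_2+c_3-5q^2)$ is nonnegative, it also contains $c_1c_2$, and adding the two inequalities cancels the cross term and, after the substitution $|I|=\tfrac12(\theta_3-q-c_3)+z$, leaves a free $q^5$ on the left against at most $\approx 40q^4$ on the right, giving the contradiction for $q\ge41$. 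If you want to salvage your restricted flag sets $\Phi,\Phi'$, you would have to replace the double count by an inclusion--exclusion estimate of $|\bigcup_F(F\cap\Phi)|$ with explicit pairwise overlap terms, which essentially reproduces the paper's argument.
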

\begin{proof}
We have $|I|\ge\frac{1}{2}(|\F|-c_3)$, we know that for all $i\in I$ the set $F_i$ is based on a point $P_i$ and we set $A:=\{a\in I\mid P_a\in S\}$. For $a\in A$ the set $F_a$ contains $\theta_2^2$ flags $(\ell,\pi)$ with $P\in \ell\subseteq S$. Since there are $(q^2+1)\theta_2$ lines in $S$, there are at most $(q^2+1)\theta_2^2$ flags $(\ell,\pi)$ with $\ell\subseteq S$. It follows that
\begin{align*}
\left|\bigcup_{a\in A}F_a\right|\le |A|(e_0-\theta_2^2)+(q^2+1)\theta_2^2.
\end{align*}
If $i\in I\setminus A$, then for each $a\in A$, the sets $F_i$ and $F_a$ share the $\theta_2$ line-plane flags $(\ell,\pi)$ with $\ell=P_iP_a$. Different $a$ in $A$ yield different $\theta_2$ pairs, and hence $F_i$ contains at least $|A|\theta_2$ flags that are contained in $\cup_{a\in A}F_a$. It follows that
\begin{align*}
\left|\bigcup_{i\in I}F_i\setminus\bigcup_{a\in A}F_a\right|\le|I\setminus A|e_0 -|A||I\setminus A|\theta_2.
\end{align*}
Lemma \ref{delta} implies therefore that
\begin{align*}
&c_3(e_0-e_1)+|A|\theta_2^2-(q^2+1)\theta_2^2+|A|(|I|-|A|)\theta_2
\\ & \le 2q^7+3q^6+4q^5+3q^4+2q^3+q^2=q^2\theta_2(2q^3+q^2+q+1).
\end{align*}
Since $e_0-e_1\ge \theta_2(q^3-3q^2-4q+5)$ and $|A|=|I|-c_1$, this implies
\begin{align}\label{diff1}
&c_3(q^3-3q^2-4q+5)+(|I|-c_1)\theta_2+c_1(|I|-c_1)  \le 2q^5+(2q^2+1)\theta_2.
\end{align}
Assume the statement of the lemma is wrong. Then
\begin{align}\label{diff2}
0\le (c_1+c_3-4q^2)(|I|-c_1+c_3-5q^2).
\end{align}
If we add the right hand side of (\ref{diff2}) to the right hand side of (\ref{diff1}) we find a valid inequality. If we replace in this inequality $|I|$ by $\frac{1}{2}(\theta_3-q-c_3)+z$ with $z:=|I|-\frac{1}{2}(\theta_3-q-c_3)$ we find after simplifications
\begin{align}\label{diff3}
\begin{multlined}
    2(5q^2+q+1-c_3)z+(q^3+6q^2-9q+8-c_3)c_3+q^5\\
        \le 38q^4+2q^3+q+1+(2q+2)c_1.
\end{multlined}
\end{align}
Now, we have $z\ge 0$, (\ref{diff1}) implies $c_3\le 3q^2$ for $q\ge 13$ and hence $(q^3+6q^2-9q+8-c_3)c_3\ge 0$ as well as $2(5q^2+q+1-c_3)z\ge 0$, so (\ref{diff3}) implies
\begin{align*}
q^5 \le 38q^4+2q^3+q+1+(2q+2)c_1.
\end{align*}
As $c_1\le |I|\le|\F|\le \theta_3-q$, this is a contradiction for $q\ge 41$.
\end{proof}

\begin{assumption*}
For the remaining part of this section we assume that
\begin{align}\label{eqn_q}
q>160\cdot 36^5.
\end{align}
\end{assumption*}

\begin{lemma}\label{solid3}
There exists a solid $S$ such that
\[
|\{F\in\F:|F|\le e_1\}|+|\{i\in I: P_i\notin S\}|\le 4q^2.
\]
 \end{lemma}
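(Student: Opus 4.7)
The plan is to argue by contradiction. Suppose that for every solid $S$ we have $c_1(S)+c_3>4q^2$, where $c_1(S):=|\{i\in I:P_i\notin S\}|$. Since (\ref{eqn_q}) ensures $q\ge 41$, Lemma~\ref{solid2} then forces the other alternative, $(|I|-c_1(S))+c_3\le 5q^2$, for every solid $S$. Writing $M:=\{P_i:i\in I\}$, which is a set because by the earlier reduction no two members of $\F$ of size $e_0$ share the same generic part, this says that every solid meets $M$ in at most $5q^2-c_3\le 5q^2$ points. The proof of Lemma~\ref{solid2} also gives $c_3\le 3q^2$, and combined with the duality reduction $|I|\ge\tfrac{1}{2}(|\F|-c_3)$ one gets $|M|=|I|\ge\tfrac{1}{2}(\theta_3-q-3q^2)\ge q^3/3$ for $q\ge 7$. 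Hence $|M|\ge dq^3$ with $d=1/3$.

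The contradiction will come from applying Lemma~\ref{solid1} to $M$ with the constants $d=1/3$, $n=12$, $m=5$. These satisfy $32n^5m/d^5=32\cdot 12^5\cdot 5\cdot 3^5=160\cdot 36^5$, so the quantitative hypothesis on $q$ in Lemma~\ref{solid1} is exactly what (\ref{eqn_q}) provides. Provided three non-collinear points $Q_1,Q_2,Q_3$ can be produced spanning a plane $\pi$ with $\pi\cap M=\emptyset$ and with each $Q_j$ lying on at most $12q^2$ lines meeting $M$, Lemma~\ref{solid1} will furnish a solid $S_0$ on $\pi$ with $|S_0\cap M|\ge 5q^2$. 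This contradicts $|S_0\cap M|\le 5q^2-c_3$ when $c_3\ge 1$; the marginal case $c_3=0$ can be handled by taking $m$ slightly above $5$ (still compatible with (\ref{eqn_q}) for $q>160\cdot 36^5$) or by observing that the proof of Lemma~\ref{solid1} actually produces a solid with strictly more than $mq^2$ points of $M$.

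The main obstacle, where the real content of the argument lies, will be exhibiting the three points $Q_1,Q_2,Q_3$. For $Q\notin M$, let $f(Q):=|\{\ell:Q\in\ell,\ \ell\cap M\ne\emptyset\}|$; the task is to locate points with $f(Q)\le 12q^2$ lying on a common $M$-free plane. The condition $f(Q)\le 12q^2$ is subtle because the naive upper bound $f(Q)\le|M|\approx q^3$ is a factor of $q$ too weak, and the average of $f$ over $\PG(4,q)$ is also of order $q^3$. The structural input should come from the covering identity for $\F$: of the $\theta_3\theta_2$ flags through $Q$, exactly $f(Q)\theta_2$ are covered by the generic parts of the point-based $F_i$ via the lines $QP_i$, so the remaining $(\theta_3-f(Q))\theta_2$ flags must be absorbed by hyperplane-based members of $\F$, by special parts of point-based $F_i$, or by the $c_3\le 3q^2$ small sets. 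Points $Q$ lying in the base hyperplane of many hyperplane-based $F_j$ receive substantial ``cheap'' coverage from those generic parts, forcing $f(Q)$ to be small; an averaging argument over the $\theta_4$ points of $\PG(4,q)$ should then produce a positive-density family of $Q$'s satisfying the required bound, from which a non-collinear triple in an $M$-free plane may be extracted. Establishing existence of such an $M$-free plane under the hypothesis that every solid contains only $O(q^2)$ points of $M$ is itself non-trivial and is where the spread-versus-concentration dichotomy for $M$ will be most delicate.
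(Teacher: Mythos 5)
Your overall framework matches the paper: you plan to produce a set $M$ of base points with $|M|\gtrsim q^3$, find three non-collinear points $Q_1,Q_2,Q_3$ spanning an $M$-free plane each lying on $O(q^2)$ lines that meet $M$, feed this into Lemma~\ref{solid1} to obtain a solid $S_0$ with $|S_0\cap M|\ge 5q^2$, and conclude via Lemma~\ref{solid2}. Your constant bookkeeping ($d=1/3$, $n=12$, $m=5$ giving $32n^5m/d^5=160\cdot 36^5$) is also sound and hits the same threshold as the paper's $d=1/4$, $n=9$, $m=5$.

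However, there is a genuine gap exactly where you say the real content lies: you never construct the three points $Q_j$. You observe that the line-count $f(Q)$ is typically of order $q^3$ and speculate that some averaging argument using the covering identity should produce enough points with $f(Q)\le 12q^2$ inside an $M$-free plane, but you do not carry this out, and it is not obvious that your sketch can be pushed through with $M=\{P_i:i\in I\}$. The paper avoids this by a different and more precise device: it sets $g_i:=|G_i\cap\bigcup_{j<i}G_j|$ for the point-based large EKR-sets $G_i$ (with base points $R_i$), orders the $G_i$ so that $g_i$ is monotone increasing, and uses Lemma~\ref{delta} to show $g_i<9q^2\theta_2$ for all $i\le j$ with $j=\tfrac14q^3+q^2+2q+1$. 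It then takes $Q_1,Q_2,Q_3$ among the $R_i$ with $i\in\{j-q-1,\dots,j\}$ and $M$ to be the earlier $R_i$'s off the plane $\pi=\langle Q_1,Q_2,Q_3\rangle$. The key point you are missing is that each line through $Q_k=R_{i_0}$ meeting $M$ contributes $\theta_2$ distinct flags to $G_{i_0}\cap\bigcup_{j<i_0}G_j$, so the bound $g_{i_0}<9q^2\theta_2$ immediately gives fewer than $9q^2$ such lines --- no averaging over $\PG(4,q)$ is needed, and the $M$-free plane is obtained trivially by simply discarding the points of $M$ inside $\pi$. To complete your argument you would need to replace your vague averaging step by something with this kind of teeth; as written, the central hypothesis of Lemma~\ref{solid1} is not verified.
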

\begin{proof}
Let $c_3$ be the number of $F\in\F$ with $|F|\not=e_0$ and thus $|F|\le e_1$. Then $\F$ contains $\beta\ge\frac{1}{2}(\theta_3-q-c_3)$ EKR-sets that are largest EKR-sets based on a point. Let these be $G_i$, $i=1,\dots,\beta$, let $R_i$ be the base point of $G_i$ and put
\begin{align*}
g_i:=\left|G_i\cap\bigcup_{j=1}^{i-1} G_j\right|.
\end{align*}
Then $|\cup G_i|=\beta e_0-\sum_{i=1}^\beta g_i$. We may assume that the sequence $g_1,\dots,g_\beta$ is monotone increasing. We want to show that $g_j$ for $j:=\frac14q^3+q^2+2q+1$ is less than $9q^2\theta_2$. Indeed, otherwise we would have $\sum_{i=j}^\beta g_i\ge (\beta-j+1)9q^2\theta_2$ and Lemma \ref{delta} would show
\begin{align*}
(\beta-j+1)9q^2\theta_2+c_3(e_0-e_1)\le 2q^7+3q^6+4q^5+3q^4+2q^3+q^2.
\end{align*}
Using the lower bound for $\beta$ {, the fact that Lemma \ref{solid2} implies $c_3\le 5q^2$} as well as $e_0-e_1=q^5-q^4-5q^3+q$, we find that this is impossible. Hence $g_j<9q^2\theta_2$ and therefore $g_i<9q^2\theta_2$ for all $i\le j$. Now, let $Q_1$, $Q_2$ and $Q_3$ be three non-collinear points in $\{R_i:i\in\{j-q-1,\dots,j\}\}$ and let $M$ be the set of all points $R_i$ with $i\le j-q-2$ that do not lie in the plane $\pi:=\langle Q_1,Q_2,Q_3\rangle$. Then $|M|\ge \frac14q^3$. Also, each of the points $Q_i$ lies on less than $9q^2$ lines that meet $M$, since every such line lies in $\theta_2$ flags that are contained in the union of the $G_i$ with $i\le j-q-2$. Then Lemma \ref{solid1} gives a solid that contains at least $5q^2$ points of $M$. The statement follows now from Lemma \ref{solid2}.
\end{proof}

\textsc{Notation}. From now on we denote by $S$ the unique solid that contains all but at most $4q^2$ of the points $P_i$ with $i\in I$ and we use the following notation:
\begin{itemize}
\item $C_0:=\{F_i\mid i\in I,\ P_i\in S\}$.
\item $C_1:=\{F_i\mid i\in I,\ P_i\notin S\}$.
\item $C_2:=\{F_i\mid i\in\{1,\dots,\theta_3-q\}\setminus I, |F_i|=e_0\}$.
\item $C_3:=\{F_i\mid i\in\{1,\dots,\theta_3-q\}\setminus I, |F_i|<e_0\}$.
\item $c_i:=|C_i|$ for $i\in\{0,\dots,3\}$.
\item $W:=\{P\in S\mid P\not=P_i\forall i\in I\}$.
\item Let $M$ be the set of all line-plane flags $(l,\pi)$ for which $l\cap S$ is a point which lies in $W$.
\end{itemize}

\begin{lemma}\label{ci_properties}
We have
\begin{enumerate}[label=(\alph*)]
\item $C_0\cup C_1\cup C_2\cup C_3$ is a partition of $\F$.
\item $c_1+c_3\le4q^2$.
\item $|W|=\theta_3-c_0$.
\item Every point of $W$ lies on the plane of exactly $q^3\theta_2$ flags of $M$.\label{ci_properties-flagsonpofW}
\item $|M|=|W|q^3\theta_2$.\label{ci_properties-cardinalityofM}
\item $c_3\le 2q^2+6q$.\label{ci_properties_boundonc3}
\end{enumerate}
\end{lemma}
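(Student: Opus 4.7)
The plan is to dispatch (a)--(e) as bookkeeping or direct enumeration and focus the real effort on (f), which refines the bound of (b) by almost a factor of two.

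Items (a)--(c) are immediate from the setup. The partition in (a) follows because the index set $\{1,\ldots,\theta_3-q\}$ splits as $I \sqcup I^c$, with $C_0 \sqcup C_1$ refining $I$ according to whether $P_i \in S$ and $C_2 \sqcup C_3$ refining $I^c$ according to whether $|F_i| = e_0$. For (b), Theorem \ref{blokuise0e1} together with maximality forces $|F| = e_0$ or $|F| \le e_1$ for every $F \in \F$, so $\{F \in \F : |F| \le e_1\} = C_3$, and Lemma \ref{solid3} yields $c_1 + c_3 \le 4q^2$ at once. For (c), the paper's earlier reduction ensures that no two $F_i$ with $|F_i| = e_0$ share a generic part; since $\FFF(P)$ determines $P$, the $P_i$ with $i \in I$ are pairwise distinct, so exactly $c_0$ of the $\theta_3$ points of $S$ are excluded from $W$, giving $|W| = \theta_3 - c_0$.

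For (d), count lines through a point $P \in S$ with $l \cap S = \{P\}$: there are $\gauss{4}{1}_q - \gauss{3}{1}_q = \theta_3 - \theta_2 = q^3$ such lines, each lying in $\theta_2$ planes, giving $q^3 \theta_2$ flags of $M$ whose line meets $S$ in $P$. Item (e) is then the sum of (d) over $P \in W$, since every $(l,\pi) \in M$ contributes to exactly one term (as $l \cap S$ is by definition a single point of $W$).

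The real content is in (f). Using Lemma \ref{delta} to express the total number of line-plane flags as $|\F| e_0 - \Delta$ with $\Delta = 2q^7 + 3q^6 + 4q^5 + 3q^4 + 2q^3 + q^2$, and using that every $F \in C_3$ has $|F| \le e_1$, we obtain
\[
|\F| e_0 - \Delta = \Bigl|\bigcup_{F \in \F} F\Bigr| \le \sum_{F \in \F} |F| \le |\F| e_0 - c_3(e_0 - e_1),
\]
so $c_3(e_0 - e_1) \le \Delta$. With $e_0 - e_1 = q^5 - q^4 - 5q^3 + q$, the ratio $\Delta/(e_0 - e_1)$ behaves like $2q^2 + 5q + O(1)$ for large $q$, and a short polynomial check shows $c_3 \le 2q^2 + 6q$ comfortably for $q$ satisfying \eqref{eqn_q}. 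The main obstacle, such as it is, is just this final polynomial bookkeeping; everything else is forced by the definitions and Lemma \ref{solid3}.
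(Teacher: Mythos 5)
Your proof matches the paper's approach on all six items: (a)--(e) are handled by the same bookkeeping and by counting the $q^3$ lines through a point of $W$ that meet $S$ only in that point, and for (f) you derive $c_3(e_0-e_1)\le 2q^7+3q^6+4q^5+3q^4+2q^3+q^2$ from Lemma~\ref{delta} and verify the polynomial inequality $(2q^2+6q)(e_0-e_1)>2q^7+3q^6+4q^5+3q^4+2q^3+q^2$, which is precisely the paper's one-line justification spelled out. One small quibble with the framing: (f) is not a refinement of (b) --- statement (b) bounds $c_1+c_3$ via Lemma~\ref{solid3}, while (f) bounds $c_3$ alone via the independent counting argument through Lemma~\ref{delta}, so the two constraints are logically separate rather than one improving the other.
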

\begin{proof}
Statement (a) is obvious from the notation introduced above. The choice of $S$ together with Lemma \ref{solid3} implies statement (b). Since no two members of $\F$ of size $e_0$ have the same generic part, we have $|W|=|S\setminus C_0|=|S|-|C_0|=\theta_3-c_0$ and thus statement (c). Furthermore, each point $P\in W$ lies on $q^3$ lines that meet $S$ only in $P$ and each such line lies in $\theta_2$ planes. Hence, for every point $P\in W$, exactly $q^3\theta_2$ flags $(\ell,\pi)$ of $M$ satisfy $\ell\cap S=P$, which proves statements (d) and (e). Finally, statement (f)  follows from Lemma \ref{delta} and
\begin{align*}
    (2q^2+6q)(e_0-e_1)>2q^7+3q^6+4q^5+3q^4+2q^3+q^2.\tag*{\qedhere}
\end{align*}
\end{proof}

\begin{lemma}\label{HowCimeetsM}~
\begin{enumerate}[label=(\alph*)]
\item If $F\in C_0$, then the generic part of $F$ does not contain a flag of $M$.\label{HowCimeetsM_C1}\label{HowCimeetsM_C0}
\item If $F\in C_1$, then $|F\cap M|\le |W|\theta_2+q^2\theta_2$.
\item If $F\in C_2$, with base hyperplane $H$, then $|F\cap M|\le q^2\theta_2$, if $H=S$, and $|F\cap M|\le |H\cap W|q^2(q+1)+q^2\theta_2$ otherwise.\label{HowCimeetsM_C2}
\end{enumerate}
\end{lemma}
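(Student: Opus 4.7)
All three parts reduce to elementary incidence counts in $\PG(4,q)$. The two ingredients are the structural description of the maximal EKR-sets from Examples \ref{voorbeeld} together with the defining property of $M$: a flag $(\ell,\pi)\in M$ satisfies that $\ell\cap S$ is a single point of $W$. In particular $\ell\not\subseteq S$, and $\ell\cap S$ cannot equal any base point $P_i$ with $i\in I$.

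For (a), any $F\in C_0$ has the form $\FFF(P_i,X)$ with $P_i\in S$, and its generic part is $\FFF(P_i)=\{(\ell,\pi):P_i\in\ell\}$. If $(\ell,\pi)\in \FFF(P_i)\cap M$, then $P_i\in\ell\cap S$, and since $\ell\cap S$ is a single point this point must equal $P_i$; but then $P_i\in W$, contradicting the definition of $W$.

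For (b), let $F=\FFF(P_i,X)\in C_1$ with $P_i\notin S$, and split $F$ into its generic and special parts. Since $P_i\notin S$, every line on $P_i$ meets $S$ in a unique point, and requiring this point to lie in $W$ leaves exactly the $|W|$ lines $P_iQ$ with $Q\in W$; each lies in $\theta_2$ planes, bounding the generic contribution to $F\cap M$ by $|W|\theta_2$, while the special part has only $q^2\theta_2$ flags in total.

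For (c), write $F$ in terms of its base hyperplane $H$, so the generic part is $\FFF(H)=\{(\ell,\pi):\pi\subseteq H\}$. A flag in $\FFF(H)\cap M$ has $\ell\subseteq H$, hence $\ell\cap S\subseteq H\cap S$. If $H=S$ this forces $\ell\subseteq S$, incompatible with $(\ell,\pi)\in M$, so only the $q^2\theta_2$ special flags remain. If $H\ne S$, then $H\cap S$ is a plane and the point $\ell\cap S$ lies in $H\cap W$; for each $Q\in H\cap W$, the $\theta_2$ lines of $H$ through $Q$ split into the $q+1$ lines that lie in the subplane $H\cap S$ (which must be discarded, since they give $\ell\subseteq S$) and the remaining $q^2$ admissible lines, each of which lies in $q+1$ planes of $H$. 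This yields at most $|H\cap W|\,q^2(q+1)$ generic flags in $M$ and at most $q^2\theta_2$ special flags. The only minor subtlety anywhere is the projective bookkeeping in this last step, where one has to correctly isolate the lines of $H$ at $Q$ that are not contained in the subplane $H\cap S$.
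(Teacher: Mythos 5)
Your proof is correct and uses essentially the same incidence-counting argument as the paper for each of the three parts: (a) observing that the base point would have to be the unique point $\ell\cap S\in W$, (b) splitting into generic and special parts and noting a line through $P_i\notin S$ meets $S$ in one point, and (c) discarding lines of $H$ lying in the plane $H\cap S$ before counting planes of $H$ through each admissible line. The paper's exposition of (a) is phrased as a short case analysis rather than a contradiction, but the content is identical.
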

\begin{proof}
(a) The flags of the generic part of $F$ either have a line that is contained in $S$ or meets $S$ in the base point of $F$, which is not in $W$. Therefore these flags do not belong to $M$.

(b) We know that $F$ is based on a point $P$. The generic part of $F$ consist of all flags whose line contains $P$. As $P\notin S$, we see that the generic part of $F$ has exactly $|W|\theta_2$ flags in $M$. The special part of $F$ has $q^2\theta_2$ flags and thus at most this many flags of $M$.

(c) We know that $F$ is based on a hyperplane $H$. The generic part of $H$ consist of all flags whose plane lies in $H$. Hence, if $H=S$, the generic part contains no flag of $M$, and if $H\not=S$, it contains exactly $|H\cap W|q^2(q+1)$ flags of $M$. The special part of $F$ has $q^2\theta_2$ flags and thus at most this many flags of $M$.
\end{proof}

\begin{lemma}\label{Wglobal}
Suppose that $z$ is an integer such that all except at most one plane of $S$ have at most $z$ points in $W$. Then
\begin{align*}
|W|q^3\theta_2\le & c_1(|W|+q^2)\theta_2+c_2(zq^2(q+1)+q^2\theta_2)+c_3e_1+s+q^3(q+1)\theta_2,
\end{align*}
where $s$ is the number of flags of $M$ that are contained in the special part of $F$ for some EKR-set $F$ of $C_0$. If every plane of $S$ has at most $z$ points in $S$, then
\begin{align*}
|W|q^3\theta_2\le & c_1(|W|+q^2)\theta_2+c_2(zq^2(q+1)+q^2\theta_2)+c_3e_1+s.
\end{align*}

\end{lemma}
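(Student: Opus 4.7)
The plan is to exploit that $\F$ covers every line-plane flag of $\PG(4,q)$, in particular every flag of $M$, so
\[
|M|\;\le\;\sum_{F\in\F}|F\cap M|.
\]
Using the partition $\F=C_0\cup C_1\cup C_2\cup C_3$ from Lemma~\ref{ci_properties}(a) and the identity $|M|=|W|q^3\theta_2$ from Lemma~\ref{ci_properties}(e), the claim reduces to bounding each of the four subsums.

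The contributions of $C_0$, $C_1$, and $C_3$ are direct. By Lemma~\ref{HowCimeetsM}(a), the generic parts of members of $C_0$ avoid $M$, so this subsum equals $s$ by the very definition of $s$. Lemma~\ref{HowCimeetsM}(b) gives a contribution of at most $c_1(|W|+q^2)\theta_2$ from $C_1$, and for $C_3$ the trivial estimate $|F\cap M|\le|F|\le e_1$ yields at most $c_3e_1$.

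The delicate step is the $C_2$-subsum. Each $F\in C_2$ is based on a hyperplane $H$, and the notation convention forbidding two size-$e_0$ members of $\F$ from sharing a generic part forces distinct members of $C_2$ to have distinct base hyperplanes. Lemma~\ref{HowCimeetsM}(c) bounds $|F\cap M|$ by $q^2\theta_2$ when $H=S$, and by $|H\cap W|q^2(q+1)+q^2\theta_2$ otherwise. Replacing $|H\cap W|$ with $z$ gives the uniform bound $c_2(zq^2(q+1)+q^2\theta_2)$; this is only invalid for members with $H\neq S$ and $|H\cap W|>z$, which means $H\cap S$ is an exceptional plane of $S$ carrying more than $z$ points of $W$. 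By hypothesis there is at most one such exceptional plane $\pi^*$, and $\pi^*$ lies in exactly $q$ hyperplanes other than $S$, so at most $q$ members of $C_2$ are exceptional. Since $|H\cap W|\le\theta_2$ always, each exceptional $F$ exceeds the uniform bound by at most $(\theta_2-z)q^2(q+1)\le\theta_2 q^2(q+1)$, and summing over the $\le q$ exceptional members gives a total surplus of at most $q\cdot\theta_2 q^2(q+1)=q^3(q+1)\theta_2$.

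Adding the four estimates produces the first displayed inequality. Under the stronger hypothesis that every plane of $S$ contains at most $z$ points of $W$, no exceptional $\pi^*$ exists, the surplus disappears, and the second inequality follows. The one non-routine point—and the main obstacle—is the bookkeeping inside $C_2$: one must use the distinct-generic-part convention to guarantee that distinct $F\in C_2$ come from distinct hyperplanes, which is what caps the number of exceptional members by $q$ rather than by something much larger.
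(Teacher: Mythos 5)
Your proof is correct and follows essentially the same route as the paper's: cover $|M|$ by the contributions of $C_0,\dots,C_3$ using Lemma~\ref{HowCimeetsM} and the $e_1$-bound for $C_3$, and isolate the at most $q$ hyperplanes of $C_2$ that meet $S$ in the unique exceptional plane (distinctness of base hyperplanes coming from the no-shared-generic-part convention), which accounts for the surplus $q^3(q+1)\theta_2$. One minor imprecision, which the paper's own write-up shares: the $C_0$-contribution should be taken as $\bigl|M\cap\bigcup_{F\in C_0}F\bigr|$, which equals $s$ by Lemma~\ref{HowCimeetsM}(a) and the definition of $s$, rather than as $\sum_{F\in C_0}|F\cap M|$, which could in principle exceed $s$; the covering of $M$ by $\F$ yields the former directly, so the conclusion is unaffected.
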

\begin{proof}
Each of the $|M|=|W|q^3\theta_2$ flags of $M$ is contained in some member of $\F=C_0\cup C_1\cup C_2\cup C_3$. Hence $|W|q^3\theta_2\le \sum_{i=0}^3\sum_{F\in C_i}|F\cap M|$. If there exists a plane of $S$ with more than $z$ points in $W$, then denote by $z'$ its number of points in $W$. Otherwise put $z':=z$. Since a plane of $S$ lies in $q$ solids other than $S$, the preceding lemma shows that $\cup_{F\in C_2}F$ and $M$ share at most
\begin{multline*}
(c_2-q)(zq^2(q+1)+q^2\theta_2)+q(z'q^2(q+1)+q^2\theta_2)\\=c_2(zq^2(q+1)+q^2\theta_2)+(z'-z)q^3(q+1)
\end{multline*}
flags. Since $|F|\le e_1$ for $F\in C_3$ and, using $z'-z\le\theta_2$ for the first assertion and $z'-z=0$ for the second assertion, they follow from the preceding lemma.
\end{proof}

\begin{lemma}\label{twoplanes}
Let $\pi_1$ and $\pi_2$ be distinct planes of $S$. Then
\begin{align}\label{eqnwithplane}
|(\pi_1\cup\pi_2)\cap W|q^2(q+1) \le 6q^3(q+3)+(|W|-q) 3q(q+1).
\end{align}
\end{lemma}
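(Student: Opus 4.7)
Fix hyperplanes $H_1, H_2$ of $\PG(4,q)$, both different from $S$, with $H_1 \cap S = \pi_1$ and $H_2 \cap S = \pi_2$; these exist since each plane of $S$ lies in $q+1$ hyperplanes, exactly one of which is $S$. Write $\ell_0 := \pi_1 \cap \pi_2$ and note that $H_1 \cap H_2$ is a plane meeting $S$ in $\ell_0$. The plan is to introduce the set
\[
N := \{ (\ell,\pi) \in M : \pi \subset H_1 \cup H_2 \}
\]
and to bound $|N|$ from below by the left-hand side of the inequality, and from above by exploiting the covering $\F = C_0 \cup C_1 \cup C_2 \cup C_3$.

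For the lower bound, the counting argument that underlies Lemma \ref{HowCimeetsM}\ref{HowCimeetsM_C2} actually yields the equality $|\{(\ell,\pi)\in M : \pi \subset H_j\}| = |\pi_j \cap W|\,q^2(q+1)$ for $j\in\{1,2\}$. Since a flag of $M$ whose plane lies in both $H_1$ and $H_2$ must have $\pi = H_1\cap H_2$, a direct count gives exactly $q|\ell_0 \cap W|$ such flags, and inclusion-exclusion produces
\[
|N| = (|\pi_1 \cap W| + |\pi_2 \cap W|)\, q^2(q+1) - q|\ell_0 \cap W| \;\ge\; |(\pi_1 \cup \pi_2) \cap W|\, q^2(q+1).
\]
Hence it suffices to verify $|N| \le 6q^3(q+3) + (|W|-q)\cdot 3q(q+1)$.

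For the upper bound I would estimate $|F \cap N|$ class by class. By Lemma \ref{HowCimeetsM}\ref{HowCimeetsM_C0} the generic part of any $F \in C_0$ is disjoint from $M$, so only the special parts contribute, and the rigid structure of each (built from a line or a solid through the base point in $S \setminus W$) combined with the restriction $\pi \subset H_1 \cup H_2$ keeps each per-set contribution small. For $F \in C_1$ with base point $P_i \notin S$, the generic part meets $N$ only when the line $P_iP'$ (with $P' \in W$) lies inside $H_1$ or $H_2$, which forces $P_i \in H_1 \cup H_2$ and $P' \in (\pi_1 \cup \pi_2) \cap W$; this produces at most $(q+1)(|\pi_1 \cap W| + |\pi_2 \cap W|)$ generic flags, plus at most $q^2\theta_2$ from the special part. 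For $F \in C_2$ based on a hyperplane $H$: if $H \in \{H_1, H_2\}$ (at most two such sets), Lemma \ref{HowCimeetsM}\ref{HowCimeetsM_C2} gives $|F \cap N| \le |\pi_j \cap W|\,q^2(q+1) + q^2\theta_2$; if $H \notin \{H_1, H_2, S\}$, the conjunction $\pi \subset H$ and $\pi \subset H_1 \cup H_2$ forces $\pi \in \{H \cap H_1, H \cap H_2\}$, leaving at most $2q(q+1)$ generic $M$-flags. For $F \in C_3$ we use the blunt bound $|F \cap N| \le |F| \le e_1$.

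Summing these contributions and plugging in $c_1 + c_3 \le 4q^2$ (Lemma \ref{ci_properties}(b)), $c_3 \le 2q^2+6q$ (Lemma \ref{ci_properties}(f)), and the trivial $|\pi_j \cap W| \le \theta_2$, should deliver $|N| \le 6q^3(q+3) + (|W|-q)\cdot 3q(q+1)$. The main technical difficulty lies in handling $C_0$: there are up to $\theta_3 - c_0 \approx q^3$ such sets and each has special part of cardinality $q^2\theta_2$, so a crude per-set bound is useless. The crucial observation is that each special part, being organised around a solid or a line through a point of $S\setminus W$, intersects the narrow family $N$ only in $O(q)$ flags, so that the total $C_0$-contribution is of order $q|W|$ and absorbs cleanly into the $(|W|-q)\cdot 3q(q+1)$ term of the right-hand side.
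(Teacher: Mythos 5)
The proposal takes a genuinely different route from the paper, and unfortunately the key step does not hold up. The paper works with the set $M'$ of all flags of $M$ whose line meets $S$ in a point of $W':=(\pi_1\cup\pi_2)\cap W$; this set has cardinality $|W'|q^3\theta_2\approx|W'|q^5$. You instead fix two hyperplanes $H_1,H_2$ on $\pi_1,\pi_2$ and work with the much smaller set $N$ (planes contained in $H_1\cup H_2$), of cardinality about $|W'|q^3$. The lower bound $|N|\geq|W'|q^2(q+1)$ via inclusion–exclusion is fine. The problem is the asserted upper bound on the $C_0$-contribution.

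You claim each special part of a line- or solid-based EKR-set in $C_0$ meets $N$ in only $O(q)$ flags, yielding a total of order $q|W|$. This is not correct. Take $F=\FFF(P,\ell)\in C_0$ with $P\in\ell\subseteq\pi_j$. Every one of the $q$ planes $\tau\ne\pi_j$ with $\ell\subset\tau\subset H_j$ satisfies $\tau\cap S=\ell$, and each such $\tau$ carries exactly $q\,|\ell\cap W|$ lines $h\subset\tau$ with $h\cap S\in W$ and $P\notin h$. Hence $|T(F)\cap N|=q^2\,|\ell\cap W|$, which can be of order $q^3$, not $O(q)$. Moreover, no choice of $H_j$ avoids this: $\ell\subset\pi_j\subset H_j$ for every hyperplane $H_j\supset\pi_j$. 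There can be up to $|\pi_j\setminus W|\approx q^2$ indices $P$ whose associated line $\ell(P)$ lies in $\pi_j$, so the total $C_0$-contribution to $N$ can reach order $q^4\,|\pi_j\cap W|$, which is a factor of roughly $q$ larger than $|N|$ itself. In that situation the inequality $|N|\le\sum_F|F\cap N|$ is satisfied automatically and carries no information, so the proposed route cannot recover the bound $6q^3(q+3)+(|W|-q)\cdot 3q(q+1)$. (By contrast, in the paper's computation the same special part contributes $|\ell\cap W'|q^3$ to $M'$, and since $|M'|$ is $\approx q^2$ times larger than $|N|$ while this per-set contribution grows only by a factor $q$, the relative weight of $C_0$ shrinks by a factor $q$, which is exactly what makes the argument close.) A secondary, related concern: if $H_j$ happens to coincide with a base hyperplane of some $F\in C_2$, that single set already covers all $|\pi_j\cap W|q^2(q+1)$ flags of $N$ living in $H_j$, and all $q$ hyperplanes on $\pi_j$ other than $S$ may be base hyperplanes, so one cannot in general choose $H_j$ to avoid this either. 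To repair the argument you would need either to enlarge $N$ (which essentially leads back to the paper's $M'$) or to find a structural reason why the lines $\ell(P)$ with $P\in\pi_j\setminus W$ cannot simultaneously lie in $\pi_j$ and hit $W$ often — and no such reason is given.
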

\begin{proof}
Put $W'=(\pi_1\cup\pi_2)\cap W$, and let $M'$ be the subset of $M$ that consists of all flags of $M$ whose line meets $S$ in a point of $W'$. Lemma \ref{ci_properties}~\ref{ci_properties-flagsonpofW} shows that $|M'|=|W'|q^3\theta_2$. Each flag of $M'$ lies in at least one of the EKR-sets of $\F=C_0\cup C_1\cup C_2\cup C_3$. Hence $|M'|\le d_0+d_1+d_2+d_3$ where $d_i$ is the number of elements of $M'$ that lie in some member of $C_i$.

For $F\in C_3$ we have $|F\cap M'|\le |F|\le e_1$. Hence $d_3\le c_3e_1$.

If $F\in C_1$, then $|F|=e_0$ and $F$ is based on a point $P \notin S$, so the flags of $M'$ that lie in the generic part of $F$ are precisely the $|W'|\theta_2$ flags whose line contains $P$ and a point of $W'$. Since the special part of $F$ has $q^2\theta_2$ flags, it follows that $d_1\le c_1(|W'|+q^2)\theta_2$.

Consider $F\in C_2$. Then $|F|=e_0$ and $F$ is based on a hyperplane $H$. If $H=S$, then the lines of all flags of $F$ are contained in $S$ and hence $F\cap M'=\emptyset$. Now we consider the case when $H\not=S$. Then the number of flags of $M'$ in the generic part of $F$ is $|H\cap W'|q^2(q+1)$. This number is at most $(2q+1)q^2(q+1)$, if the plane $H\cap S$ is different from $\pi_1$ and from $\pi_2$, and it is $|W\cap \pi_i|q^2(q+1)$, if $H\cap S=\pi_i$. Since there are exactly $q$ solids that meet $S$ in $\pi_1$ and as many that meet $S$ in $\pi_2$, it follows that the number of flags of $M'$ that lie in the generic part of at least one EKR-set of $C_2$ is at most
\begin{align*}
q(|W\cap \pi_1|+|W\cap \pi_2|)q^2(q+1)+(c_2-2q)(2q+1)q^2(q+1).
\end{align*}
The special part of each EKR-set of $C_2$ has $q^2\theta_2$ flags and thus at most this many flags of $M'$. Using $|W\cap \pi_1|+|W\cap \pi_2|\le|W'|+q+1$, it follows that
\begin{align*}
d_2\le q(|W'|+q+1)q^2(q+1)+(c_2-2q)(2q+1)q^2(q+1)+c_2q^2\theta_2.
\end{align*}
Finally we consider an EKR-set $F$ of $C_0$. Then $|F|=e_0$ and $F$ is based on a point $P$. We know from Lemma \ref{HowCimeetsM} that only the special part $T$ of $F$ can contribute to $M'$. For $T$ there are the following possibilities:
\begin{itemize}
\item There exists a line $\ell$ with $P\in \ell$ and $T$ consists of all flags whose plane contains $\ell$ and whose line does not contain $P$. If $\ell$ meets $S$ only in $P$, then $|T\cap M'|=|W'|q$. If $\ell$ is contained in $S$, then $|T\cap M'|=|\ell\cap W'|q^3$, which is at most $2q^3$, if $P\notin \pi_1\cup\pi_2$, and at most $q^4$, if $P\in\pi_1\cup\pi_2$. Since $|W'|\le 2q^2+q+1$, it follows that $|T\cap M'|\le q^4$, if $P\in \pi_1\cup\pi_2$, and $|T\cap M'|\le q(2q^2+q+1)$ otherwise.
\item There exists a solid $H$ with $P\in H$ and $T$ consists of all line-plane flags $(h,\tau)$ with $P \in  \tau\subseteq H$ and $P\notin h$. Then $T\cap M'=\emptyset$, if $H=S$, and $|T\cap M'|=|H\cap W'|q^2$, if $H\not=S$. In the second case this number is $|W'\cap\pi_i|q^2$ if $H\cap S=\pi_i$ for some $i\in\{1,2\}$, and it is at most $(2q+1)q^2$, if $H\cap S\notin\{\pi_1,\pi_2\}$. Notice that $H\cap S=\pi_i$ implies $P\in\pi_i$, so that $|W'\cap\pi_i|\le q^2+q$ and hence $|W'\cap\pi_i|q^2\le q^3(q+1)$.
\end{itemize}

Summarizing we see that $|T\cap M'|\le q(2q^2+q+1)$, if $P\notin \pi_1\cup\pi_2$, and $|T\cap M'|\le q^3(q+1)$, if $P\in \pi_1\cup\pi_2$, which proves
\begin{align*}
    d_0
        &\le (c_0-2q^2-q-1+|W'|)q(2q^2+q+1)+(2q^2+q+1-|W'|)q^3(q+1)\\
        &\le c_0q(2q^2+q+1)+2q^6-q^5-|W'|(q^4-q^3-q^2-q).
\end{align*}
It follows that
\begin{align}
|W'|q^3\theta_2
    &=|M'|\le d_0+d_1+d_2+d_3\nonumber\\
    &\le c_0q(2q^2+q+1)+2q^6-q^5-|W'|(q^4-q^3-q^2-q)\nonumber\\
    &\hphantom{\le{}}\mathrel{+}c_1(|W'|+q^2)\theta_2+q(|W'|+q+1)q^2(q+1)\nonumber\\
    &\hphantom{\le{}}\mathrel{+}(c_2-2q)(2q+1)q^2(q+1)+c_2q^2\theta_2+c_3e_1\nonumber
\intertext{and simplifications show}
|W'|q^4\theta_1
    &\le |W'|q\theta_2+q^3(2q^3-4q^2-4q-1)+c_0q(q^2+\theta_2)\nonumber\\
    &\hphantom{\le{}}\mathrel{+}\underbrace{c_1(|W'|+q^2)\theta_2+ c_2q^2(\theta_2+2q^2+3q+1)+c_3e_1}_{=:\xi}.\label{eqndelta}
\end{align}
We put $\delta:=c_1+c_2+c_3$, which also implies $c_0=\theta_3-q-\delta$, and use $|W'|\le q^2+\theta_2$ as well as $c_3\le 2q^2+6q$ from Lemma \ref{ci_properties}~\ref{ci_properties_boundonc3}, to see that
\begin{align*}
\xi
    &\le\delta(3q^2+q+1)\theta_2+c_3(e_1-(3q^2+q+1)\theta_2)\\
    &\le\delta(3q^2+q+1)\theta_2+(2q^2+6q)(q^4+5q^3-q^2-q).
\end{align*}
Using this as well as $|W'|\le q^2+\theta_2$ and $c_0=\theta_3-q-\delta$ on the right hand side of
inequality (\ref{eqndelta}) we find
\begin{align*}
|W'|q^4\theta_1 \le
6q^6+17q^5+29q^4-2q^3-3q^2+2q+\delta (3q^4+2q^3+4q^2+q+1).
\end{align*}
Substituting $\delta=|W|-q$ therein implies the statement.
\end{proof}

\begin{lemma}\label{Watmostquadraticinq}
We have $c_0\ge q^3-18q+1$ and thus $|W|\le q^2+19q$.
\end{lemma}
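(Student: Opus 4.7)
My plan is to prove this by contradiction: assume $c_0\le q^3-18q$, equivalently $|W|=\theta_3-c_0\ge q^2+19q+1$, and derive a contradiction by combining Lemmas~\ref{twoplanes} and~\ref{Wglobal}.

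The first step is to use Lemma~\ref{twoplanes} to control the distribution of $|W\cap\pi|$ over planes $\pi$ of $S$. Since two distinct planes of $S$ meet in a line of at most $q+1$ points, we have $|(\pi_1\cup\pi_2)\cap W|\ge|W\cap\pi_1|+|W\cap\pi_2|-(q+1)$, so taking $\pi_1,\pi_2$ as the two planes of $S$ with the most points of $W$ and using $z_2\le\tfrac12(z_1+z_2)$ yields an explicit bound of the form
\begin{align*}
z\le\tfrac{1}{2}\left((q+1)+\tfrac{6q(q+3)}{q+1}+\tfrac{3(|W|-q)}{q}\right)
\end{align*}
on the second-largest value $z$ of $|W\cap\pi|$. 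Consequently all but one plane of $S$ has at most $z$ points of $W$.

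The second step is to apply Lemma~\ref{Wglobal} in its first form with this value $z$, giving
\begin{align*}
|W|q^3\theta_2\le c_1(|W|+q^2)\theta_2+c_2(zq^2(q+1)+q^2\theta_2)+c_3 e_1+s+q^3(q+1)\theta_2.
\end{align*}
The delicate task is to bound $s=\sum_{F\in C_0}|F_{\mathrm{special}}\cap M|$. For each $F=\FFF(P,X)\in C_0$ with $P\in S\setminus W$, I compute $|F_{\mathrm{special}}\cap M|$ in the four possible cases: $X$ is a line of $S$ through $P$, contributing $q^3|W\cap X|$; $X$ is a line meeting $S$ only in $P$, contributing $q|W|$; $X=S$, contributing $0$; or $X$ is a solid $H\ne S$ containing $P$, contributing $q^2|W\cap(H\cap S)|$. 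Since each base point $P\in S\setminus W$ corresponds to a unique $F\in C_0$, the step~1 bound on $z$ shows that only EKR-sets of the last type with $H\cap S$ equal to the single exceptional plane $\pi_1$ can contribute more than $O(q^3)$ each.

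Finally, using $c_0+c_1+c_2+c_3=\theta_3-q$, together with $c_1+c_3\le 4q^2$ and $c_3\le 2q^2+6q$ from Lemma~\ref{ci_properties}, I substitute the bounds into the inequality above and simplify. Under the assumption $|W|\ge q^2+19q+1$ and the hypothesis~(\ref{eqn_q}) on $q$, the resulting polynomial inequality in $|W|$ and $q$ fails, yielding the desired contradiction. The hard part is producing a bound on $s$ sharp enough to yield the precise constant $19q$: the naive bound $s\le c_0 q^2\theta_2$ is too loose, so the argument must exploit both the uniqueness of the exceptional plane in step~1 and a global double counting (relating $\sum_{F\in C_0}|W\cap(H_F\cap S)|$ to $|W|\cdot\theta_1$ via the fact that each point of $W$ lies in only $\theta_1$ planes of $S$ through any chosen direction) to replace $q^2\theta_2$ per EKR-set by a bound of order $q^3$ or $q^2z$ for almost all $F\in C_0$.
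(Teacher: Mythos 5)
Your high-level plan (combine Lemmas \ref{twoplanes} and \ref{Wglobal}, use the $c_1,c_3$ bounds from Lemma \ref{ci_properties}, derive a polynomial inequality that fails) does follow the paper, but there are two genuine problems.

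First, your claim that ``the naive bound $s\le c_0 q^2\theta_2$ is too loose'' is incorrect, and the elaborate double-counting you sketch to replace it is unnecessary and unexecuted. The paper uses precisely $s\le c_0 q^2\theta_2$. After moving $c_1|W|\theta_2$ to the left, this gives $|W|(q^3-c_1)\theta_2\le c_0q^2\theta_2+c_1q^2\theta_2+c_2(zq^2(q+1)+q^2\theta_2)+c_3e_1+q^3(q+1)\theta_2$. Using $c_0+c_1+c_2+c_3=\theta_3-q$, the $q^2\theta_2$ coefficients collapse to $(\theta_3-q)q^2\theta_2$, which together with $c_3\le2q^2+6q$ reduces the right side to $q^7+10q^6+c_2zq^2(q+1)$ for $q$ as large as assumed. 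The $19q$ comes out without refining $s$ at all.

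Second, and more seriously, your contradiction scheme is incomplete. Once Lemma \ref{twoplanes} (in the form $2zq^2(q+1)\le 8q^4+3\delta(q^2+q)$ with $\delta=|W|-q$) is substituted in, the resulting inequality $(\delta+q)(q^3-4q^2)\theta_2\le q^7+10q^6+\delta\bigl(4q^4+\tfrac32\delta(q^2+q)\bigr)$ is linear in $\delta$ on the left and \emph{quadratic with positive leading coefficient} on the right. Hence it fails only on a bounded interval of $\delta$ (roughly $[q^2+18q,\tfrac23q^3-7q^2]$), and it is actually \emph{satisfied} again for large $\delta$. Assuming $|W|\ge q^2+19q+1$ and ``substituting'' does not by itself give a contradiction. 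You must separately rule out the upper branch, which the paper does via $\delta=\theta_3-q-c_0$, $c_0+c_1=|I|\ge\tfrac12(\theta_3-q-c_3)$ and $c_1+c_3\le4q^2$, yielding $\delta<\tfrac23q^3-7q^2$. Without this a priori cap on $\delta$ your argument cannot close.
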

\begin{proof}
Let $\pi_1$ and $\pi_2$ be planes of $S$ such that $|\pi_1\cap W|\ge|\pi_2\cap W|\ge|\pi\cap W|$ for every plane $\pi$ of $S$ other than $\pi_1$. Put $z=|\pi_2\cap W|$.
The number $s$ occurring in the assertion of Lemma \ref{Wglobal} is at most $c_0q^2\theta_2$, since the special part of each EKR-sets of $C_0$ has cardinality $q^2\theta_2$. Therefore, Lemma \ref{Wglobal} shows \begin{align*}
|W|(q^3-c_1)\theta_2\le c_0q^2\theta_2+c_1q^2\theta_2+c_2(zq^2(q+1)+q^2\theta_2)+c_3e_1+q^3(q+1)\theta_2.
\end{align*}
Since $c_0+c_1+c_2+c_3=\theta_3-q$, the right hand side is equal to
\begin{align*}
(\theta_3-q)q^2\theta_2+c_2zq^2(q+1)+c_3(e_1-q^2\theta_2)+q^3(q+1)\theta_2.
\end{align*}
Using $c_3\le 2q^2+6q$ from Lemma \ref{ci_properties}~\ref{ci_properties_boundonc3} and the definition of $e_1$ implies
\begin{align*}
|W|(q^3-c_1)\theta_2\le q^7+10q^6+c_2zq^2(q+1).
\end{align*}
We put $\delta:=c_1+c_2+c_3$, such that $|W|=\theta_3-c_0=\delta+q$ and thus
\begin{align}\label{Wgrob1}
(\delta+q)(q^3-c_1)\theta_2\le q^7+10q^6+\delta zq^2(q+1).
\end{align}
Now, the preceding lemma states
\begin{align*}
    |(\pi_1\cup\pi_2)\cap W|q^2(q+1) \le 6q^4+18q^3+3\delta(q^2+q)
\end{align*}
and, since $|(\pi_1\cup\pi_2)\cap W|\ge|\pi_1\cap W|+|\pi_2\cap W|-(q+1)\ge 2z-q-1$, this implies
\begin{align}\label{Wgrob2}
    2zq^2(q+1) \le 8q^4+3\delta (q^2+q).
\end{align}
Combining (\ref{Wgrob1}) with (\ref{Wgrob2}) and using $c_1\le 4q^2$ results in
\begin{align*}
(\delta+q)(q^3-4q^2)\theta_2\le\ & q^7+10q^6+\delta(4q^4+\frac32\delta (q^2+q)).
\end{align*}
It is easy to verify that this inequality is not satisfied for $\delta=q^2+18q$ nor for $\delta=\frac23q^3-7q^2$. Since $\delta$ occurs quadratic in the inequality, it follows that $\delta$ does not lie in the interval $[q^2+18q,\frac23q^3-7q^2]$.
However, we have $\delta=\theta_3-q-c_0$ as well as $c_0+c_1=|I|\ge \frac{1}{2}(\theta_3-q-c_0)$ and since $c_1+c_3\le 4q^2$ this implies $\delta<\frac23q^3-7q^2$, that is, $\delta\le q^2+18q$.
\end{proof}

\begin{lemma}\label{planes_are_small}
Every plane of $S$ has at most $10q$ points in $W$.
\end{lemma}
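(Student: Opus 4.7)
The plan is to obtain this as a direct consequence of Lemma \ref{twoplanes}, combined with the bound $|W|\le q^2+19q$ already proved in Lemma \ref{Watmostquadraticinq}. Let $\pi$ be an arbitrary plane of $S$ for which I wish to bound $|\pi\cap W|$. I will pick any plane $\pi'$ of $S$ distinct from $\pi$ (which exists since $S$ contains $\theta_3>1$ planes), and exploit the trivial inclusion $\pi\cap W\subseteq (\pi\cup\pi')\cap W$. Applying Lemma \ref{twoplanes} to $\pi_1:=\pi$ and $\pi_2:=\pi'$ immediately converts the problem into a purely numerical one.

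Specifically, Lemma \ref{twoplanes} gives
\[
|\pi\cap W|\,q^2(q+1)\le |(\pi\cup\pi')\cap W|\,q^2(q+1)\le 6q^3(q+3)+(|W|-q)\cdot 3q(q+1).
\]
Into the right hand side I would substitute $|W|-q\le q^2+18q$ from Lemma \ref{Watmostquadraticinq}, obtaining a bound of the form $9q^4+O(q^3)$. After dividing through by $q^2(q+1)$ and performing a short polynomial division, one sees $|\pi\cap W|\le 9q+66-\frac{12}{q+1}$, which is strictly less than $10q$ as soon as $q\ge 66$; this is satisfied by a huge margin under the standing assumption (\ref{eqn_q}).

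I do not expect any real obstacle here: the two preceding lemmas have done essentially all the work, and the present statement is just the clean consequence one gets by inserting the global bound on $|W|$ back into the localized two-plane inequality. The only small point of care is that the constant $10q$ is weaker than what the calculation actually delivers ($\approx 9q$), so there is comfortable slack, which reassures me that I have not miscounted. This lemma is thus best thought of as a bookkeeping step that converts the quadratic global bound on $|W|$ into a linear local bound on each individual plane, which will presumably be used in subsequent arguments (perhaps together with Lemma \ref{Wglobal}, where the parameter $z$ is the quantity this lemma controls).
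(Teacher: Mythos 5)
Your argument is correct and is exactly the computation that the paper's terse one-line proof ("This is an implication of Lemma \ref{twoplanes} and $|W|\le q^2+19q$ from Lemma \ref{Watmostquadraticinq}") is alluding to. The numerics check out: $6q^3(q+3)+(q^2+18q)\cdot 3q(q+1)=9q^4+75q^3+54q^2$, giving $|\pi\cap W|\le 9q+66-\tfrac{12}{q+1}\le 10q$ for $q\ge 66$, which holds comfortably under Assumption~(\ref{eqn_q}).
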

\begin{proof}
This is an implication of Lemma \ref{twoplanes} and $|W|\le q^2+19q$ from Lemma \ref{Watmostquadraticinq}.
\end{proof}

\begin{theorem}
We have $\F=C_0$.
\end{theorem}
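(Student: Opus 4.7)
I would argue by contradiction: assume $\delta := c_1 + c_2 + c_3 \ge 1$, so that $|W| = \delta + q \ge q+1$. The starting point is Lemma~\ref{Wglobal} in its second form, which is licensed by the uniform plane bound $z = 10q$ from Lemma~\ref{planes_are_small}:
\begin{align*}
(\delta+q)q^3\theta_2 \le c_1(\delta+q+q^2)\theta_2 + c_2\bigl(10q^3(q+1)+q^2\theta_2\bigr) + c_3 e_1 + s.
\end{align*}

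The crux is a sharpened bound on $s$, which counts flags of $M$ covered by special parts of $C_0$-elements. According to Examples~\ref{voorbeeld}, each $F \in C_0$ is of one of four types, and the contribution of $F$ to $|F\cap M|$ is, respectively, exactly $0$ for $F = \FFF(P,S)$; at most $q^2|W\cap(H\cap S)| \le 10q^3$ for $F = \FFF(P,H)$ with $H\ne S$ (using Lemma~\ref{planes_are_small}); exactly $q|W|$ for $F = \FFF(P,\ell)$ with $\ell\not\subset S$; and exactly $q^3|\ell\cap W|$ for $F = \FFF(P,\ell)$ with $\ell\subset S$. For the last and most delicate type, I would double-count pairs $(F,Q)$ with $Q\in\ell_F\cap W$: for fixed $Q\in W$ the number $B_Q$ of such $F$ is at most $\theta_2(q+1)-(\theta_2+|W|-1) = c_0$, since each line through $Q$ in $S$ contributes at most one base point in $S\setminus W$ per non-$W$ position. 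Summing the four types and using the bound on $B_Q$ together with $|W|\le q^2+19q$ from Lemma~\ref{Watmostquadraticinq}, I get a substantially tighter estimate on $s$ than the trivial $s\le c_0 q^2\theta_2$.

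Substituting back, and using Lemma~\ref{ci_properties}(b),(f) to restrict $c_1+c_3\le 4q^2$ and $c_3\le 2q^2+6q$ and so to bound the contributions of $c_1, c_3$ uniformly, the main inequality becomes a polynomial relation in $\delta$ and $q$. After optimising the right side over $c_1, c_2, c_3$ (where the $c_2$-coefficient $11q^4+11q^3+q^2$ dominates), and invoking the hypothesis $q > 160\cdot 36^5$, the inequality forces $\delta \le 0$, contradicting $\delta \ge 1$. This proves $c_1 = c_2 = c_3 = 0$, i.e., $\F = C_0$.

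The main obstacle I anticipate is that the naive bound $s\le c_0 q^2\theta_2$ only yields $\delta = O(q^2)$, essentially recovering Lemma~\ref{Watmostquadraticinq}. To close the remaining gap and reach $\delta = 0$, I would iterate: each round, insert the current bound on $|W|$ into Lemma~\ref{twoplanes} to sharpen the uniform plane bound $z$ below its previous value, and then feed the improved $z$ back into Lemma~\ref{Wglobal} in tandem with the refined $s$-bound described above. A final case analysis, ruling out each possible small value of $\delta$ by the rigidity forced by both the plane-intersection cap and the degree bound $B_Q\le c_0$, completes the argument.
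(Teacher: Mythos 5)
The proposal reproduces the paper's overall framework — decompose $|F\cap M|$ according to the four types of special parts of $C_0$-elements (Examples~\ref{voorbeeld}), feed the estimates into Lemma~\ref{Wglobal} using the uniform plane bound $z = 10q$ from Lemma~\ref{planes_are_small}, and try to force $\delta = 0$ — but the step you have identified as the crux is exactly where the argument breaks down.

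Your bound $B_Q \le \theta_2(q+1) - (\theta_2 + |W| - 1) = c_0$ is the trivial one: $B_Q$ counts a subset of $C_0$, and $|C_0| = c_0$ since base points in $S\setminus W$ are distinct. The computation $\theta_2(q+1)-(\theta_2+|W|-1) = \theta_3 - |W| = c_0$ is just $|S\setminus W|$ counted over lines through $Q$. Feeding $B_Q \le c_0$ into the double count gives $\sum_F |\ell_F\cap W| = \sum_{Q\in W} B_Q \le |W|c_0$, which, with $|W|\sim q^2$ and $c_0\sim q^3$, is on the order of $q^5$ — worse than the naive $\alpha(q+1) \le c_0(q+1) \sim q^4$ you already get from $|\ell_F\cap W|\le q+1$. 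So this does not ``substantially tighten'' $s$; it loosens it. The paper's actual key ingredient here is a second-moment identity: since any two points of $W$ span a unique line of $S$, one has $\sum_{\ell\in A}|\ell\cap W|(|\ell\cap W|-1)\le |W|(|W|-1)$, which combined with $(|\ell\cap W|-1)(|\ell\cap W|-2)\ge 0$ yields $\sum_{\ell\in A}|\ell\cap W|\le \tfrac12|W|(|W|-1)+|A|$. This is roughly a factor $q$ sharper than anything your $B_Q$ bound can deliver and is the step that makes the numbers close; nothing equivalent appears in your plan.

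The second gap is the proposed bootstrap: iterating Lemma~\ref{twoplanes} with the current bound on $|W|$ to sharpen $z$ and feeding the improved $z$ into Lemma~\ref{Wglobal} does \emph{not} converge to $\delta=0$. The inhomogeneous terms (e.g.\ the $6q^3(q+3)$ in (\ref{eqnwithplane})) stall the iteration at $z\sim cq$ and $|W|\sim q^2$ for a fixed constant $c>0$. The paper does not iterate at all: after the moment bound and a similar but separate estimate $\sum_{(P,H)\in B}(|H\cap W|-q)\le\tfrac12(\theta_3-q-\delta)(\delta+1)$, the analytic bound only yields $\delta\le 2$, and the cases $\delta=2$ and $\delta=1$ are then ruled out by ad hoc structural arguments (whether $W$ spans a plane or a line, what the remaining EKR-set must cover). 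Your ``final case analysis'' gestures in this direction but would need those specific rigidity arguments; the degree bound $B_Q\le c_0$ contributes nothing to them.
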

\begin{proof}
Like in the previous proofs we put $\delta:=c_1+c_2+c_3$, which again implies $|W|=q+\delta$ as well as $\delta=\theta_3-q-c_0$. From \ref{Watmostquadraticinq} and \ref{planes_are_small} we have $|W|\le q^2+19 q$ and $|\pi\cap W|\le 10q$ for all planes $\pi$ of $S$. Therefore Lemma \ref{HowCimeetsM} shows that each of the EKR-sets $F\in C_1\cup C_2$ satisfies $|F\cap M|\le12q^4$. Since $e_1<12q^4$, the same holds for $F\in C_3$. Therefore, the total contribution of all EKR-sets in $C_1\cup C_2\cup C_3$ to $M$ is at most $12\delta q^4=12(|W|-q)q^4$. The generic part of the EKR-sets in $C_0$ are disjoint from $M$. Furthermore, the generic part of every EKR-set in $C_0$ is disjoined from $M$ and thus it remains to consider the special parts $T(F)$ of the EKR-sets $F\in C_0$. In view of that we define
\begin{align*}
    \gamma
        &:=|\{F\in C_0:T(F)\text{ is based on a line }\ell\not\le S\}|,\\
    \alpha
        &:=|\{F\in C_0:T(F)\text{ is based in a line }\ell\le S\}|,\\
    \beta
        &:=|\{F\in C_0:T(F)\text{ is based on a hyperplane }H\}|.
\end{align*}
Moreover, we let $A$ be the set of lines $\ell$ of $S$ such that $\FFF(P,\ell)\in C_0$ for some point $P$ of $\ell$ and we let $B$ be the set of all point-hyperplane pairs $(P,H)$ with $\FFF(P,H)\in C_0$ and $H\not=S$. Then $\alpha+\beta+\gamma=c_0$, $|A|\le\alpha$ and $|B|\le\beta$. Recall that Lemma \ref{HowCimeetsM}~\ref{HowCimeetsM_C2} shows that, if $F\in C_0$ is such that $T(F)$ is hyperplane based with hyperplane $S$, then $T(F)$ does not contribute to $M$. Therefore, we have
\begin{align}\label{eqnsmallWW}
    |W|q^3\theta_2\le12(|W|-q)q^4+\sum_{\ell\in A}|\ell\cap W|q^3+\sum_{\mathclap{(P,H)\in B}}|H\cap W|q^2+\gamma |W|q.
\end{align}
Furthermore, since the product of two consecutive integers is non-negative we have
\begin{align*}
0
    &\le\sum_{\ell\in A}(|\ell\cap W|-1)(|\ell\cap W|-2)\\
    &=\sum_{\ell\in A}|\ell\cap W|(|\ell\cap W|-1)-2\sum_{\ell\in A}|\ell\cap W|+2|A|\\
    &\le|W|(|W|-1)-2\sum_{\ell\in A}|\ell\cap W|+2|A|.
\end{align*}
Since $\alpha+\beta+\gamma=\theta_3-|W|$ and $|A|\le\alpha$ we have $|A|\le\theta_3-|W|-\beta-\gamma$ and thus this equation implies
\begin{align*}
    \sum_{\ell\in A}|\ell\cap W|\le \frac{1}{2}|W|(|W|-3)+\theta_3-\beta-\gamma.
\end{align*}
Using this and $|B|\le\beta$ in (\ref{eqnsmallWW}) we find
\begin{align}\label{stillwithgamma}
    L:=|W|q^3\left(\theta_2-\frac{1}{2}(|W|-3)\right)
        &\le 12(|W|-q) q^4+(\theta_3-\gamma)q^3\nonumber\\
        &\hphantom{\le{}}\mathrel{+}\gamma |W|q+\sum_{\mathclap{(P,H)\in B}}(|H\cap W|-q)q^2.
\end{align}
Now, we first show that the coefficient of $\gamma$ in this equation is negative, so that we may omit $\gamma$ therein. Since $|W|\le q^2+19q$ we have $L\ge\frac{1}{3}|W|q^5$. Furthermore, Lemma \ref{planes_are_small} shows $|H\cap W|\le 10q$ for all $(P,H)\in B$ and, since $|B|+\gamma\le\beta+\gamma\le \theta_3-|W|$, we find that
\begin{align*}
    \gamma|W|q+\sum_{\mathclap{(P,H)\in B}}(|H\cap W|-q)q^2\leq \gamma(q^3+19q^2)+9q^3|B|\le (\theta_3-|W|)9q^3.    
\end{align*}
Using this as well as $|W|\le q^2+19q$ and $L\ge\frac{1}{3}|W|q^5$ in Equation (\ref{stillwithgamma}) we have
\begin{align*}
    \frac{1}{3}|W|q^5\le12(q^2+18q)q^4+\theta_3q^3+(\theta_3-|W|)9q^3
\end{align*}
and thus $|W|\le\frac{1}{2}q^2$. Hence, the coefficient $|W|q-q^3$ of $\gamma$ in (\ref{stillwithgamma}) is negative and therefore $\gamma$ can be omitted in the inequality. Doing that, replacing $|W|$ by $q+\delta$ and simplifying we find
\begin{align}\label{eqnprofinal}
    (q+\delta)q\left(q^2+\frac{1}{2}(q-\delta+5)\right)&\le12\delta q^2+\theta_3q+\sum_{\mathclap{(P,H)\in B}}(|H\cap W|-q).
\end{align}
If $\pi$ is a plane of $S$, then the number of $(P,H)\in B$ with $H\cap S=\pi$ is at most $\theta_2-|\pi\cap W|$. Also, if $\pi_1$ and $\pi_2$ are distinct planes of $S$, then
\begin{align}\label{blapi1pi2}
  |\pi_1\cap W|+|\pi_2\cap W|\le |W|+|\pi_1\cap \pi_2\cap W|\le 2q+1+\delta.  
\end{align}
We claim that
\begin{align}\label{eqnsumestimate}
    \sum_{\mathclap{(P,H)\in B}}(|H\cap W|-q)\le\frac{1}{2}(\theta_3-q-\delta)(\delta+1).
\end{align}
Since $|B|\le\theta_3-q-\delta$, this is clear if $|H\cap W|\le q+\frac{1}{2}(\delta+1)$ for all $H\in B$. Hence, we may assume that there exists a flag $(P_0,H_0)\in B$ with $|H_0\cap W|=q+x$ and $x\ge\frac{1}{2}(\delta+1)$. Then Equation (\ref{blapi1pi2}) implies $|H\cap W|\le q+1+\delta-x$ for all $(P,H)\in B$ with $H\cap S\not=H_0\cap S$. Furthermore, if $(P,H)\in B$ with $H\cap S=H_0\cap S$, then $P\in H_0\cap S$ and hence this happens at most $\theta_2-q-x$ times. Together this implies
\begin{align*}
\sum_{(P,H)\in B}(|H\cap W|-q)
    &\le(\theta_2-q-x)x+(\theta_3-q-\delta-(\theta_2-q-x))(\delta+1-x)\\
    &=\frac{1}{2}((\theta_3-q-\delta)(\delta+1)-\xi(\xi+q^3-q^2)),
\end{align*}
where $\xi:=(2x-\delta-1)>0$ and thus (\ref{eqnsumestimate}) holds either way.

Now, we may use the bound (\ref{eqnsumestimate}) in Equation (\ref{eqnprofinal}) to find
\begin{align*}
    (q+\delta)q\left(q^2+\frac{1}{2}(q-\delta+5)\right)
        &\le12\delta q^2+\theta_3q+\frac{1}{2}(\theta_3-q-\delta)(\delta+1),
\end{align*}
which is equivalent to
\begin{align}
    \frac{1}{2}\delta q(q^2-25q-\delta+5)+\frac{1}{2}\delta^2\le(q^2-q+1)q+\frac{1}{2}.
\end{align}
Using $\delta=|W|-q<\frac{1}{2}q^2$, this implies $\delta<3$ and thus $\delta\le 2$, that is, it remains to show $\delta\notin\{1,2\}$.

First consider $\delta=2$. Then Equation (\ref{eqnprofinal}) shows
\begin{align*}
    \frac{1}{2}(3q^3-45q^2+4q)\le\sum_{\mathclap{(P,H)\in B}}(|H\cap W|-q).
\end{align*}
Since $|B|\le c_0=\theta_3-q-\delta$ and since $|H\cap W|\le|W|=q+2$ for all $(P,H)\in B$, this implies $|H\cap W|>q+1$ and thus $|H\cap W|=q+2=|W|$ for at least $\frac{1}{2}(q^3-47q^2+4q+2)$ elements $(P,H)\in B$. Notice that $|H\cap W|=q+2$ implies $W\subseteq H$, that is, $W\subseteq H\cap S$. Therefore $W$ spans a plane $\sigma$ of $S$. However, $(P,H)\in B$ with $W\subseteq H$ implies $P\in H\cap S=\sigma$ and this may happen at most $\theta_2-|W|=q^2-1$ times, a contradiction.

Now, suppose that $\delta=1$. Then Equation (\ref{eqnprofinal}) shows
\begin{align*}
    \frac{1}{2}(q^3-21q^2+2q)\le\sum_{\mathclap{(P,H)\in B}}(|H\cap W|-q)
\end{align*}
and, since $|H\cap W|\le|W|=q+1$ for all $(P,H)\in B$, this implies that there are $\frac{1}{2}(q^3-21q^2+2q)$ elements $(P,H)\in B$ with $|H\cap W|>q$ and thus $|H\cap W|=q+1$. Now, if $W$ spans a plane $\sigma$ then we have seen above that there are at most $\theta_2-|W|=q^2$ elements $(P,H)\in B$ with $W\le H$, a contradiction. Therefore, we may assume that $W$ spans a line $\ell$, only. Hence, finally, there exists only one EKR-set $F$ in $\F\setminus C_0$. Now, the special parts of the EKR-sets of $C_0$ do not contain any flag $(h,\pi)$ with $\pi\cap S=\ell$ and therefore these $q^2\theta_2$ flags must lie in $F$. This implies that $F$ may not be a subset of a hyperplane-based EKR-set, nor may it be a subset of a point based EKR-set with point outside of $S$. Hence, we have $|F|\le e_1$.

Now, reconsider the set $M$ of all $|W|q^3\theta_2=(q+1)q^3\theta_2$ flags $(h,\tau)$ such that $h\cap S$ is a point of $W$. Each point $P\in S\setminus W$ is the base point of exactly one EKR-set of $C_0$ and we let $S(P)$ be its special part. Then $M$ is a subset of the union of $F$ and the sets $S(P)$ with $P\in S\setminus W$. The $q^2(q+1)$ points of $S\setminus W$ are distributed in the $q+1$ planes of $S$ on $\ell$. Consider such a plane $\pi$ and let
\begin{enumerate}[label=$\bullet$]
    \item
        $\gamma_\pi$ be the number of points $P\in\pi\setminus\ell$ for which $S(P)$ is based on a line that meets $S$ only in $P$,
    \item
        let $a_\pi$ be the number of points $P\in\pi\setminus\ell$ for which $S(P)$ is based on a line that is contained in $S$, and
    \item
        let $b_\pi$ be the number of pairs $(P,H)\in B$ with $P\in \pi$.
\end{enumerate}
Then there are at most $\gamma_\pi(q+1)q+a_\pi q^3$ flags in $M$ that lie in $S(P)$ for some point $P\in\pi\setminus\ell$ such that $S(P)$ is based on a line. Now, consider the $b_\pi$ pairs $(P,H)\in B$ with $P\in \pi$. The special part $S(P)$ of every such pair contains $|H\cap\ell|q^2$ pairs of $M$. If $\ell\not\subseteq H$, then this is $q^2$ and otherwise it is $q^2(q+1)$. For distinct $(P_1,H_1),(P_2,H_2)\in B$ with $P_1,P_2\in\pi$ and $\pi\subseteq H_1=H_2$, the $q^2$ flags $(g,\tau)\in M$ for which $\tau\cap S=P_1P_2$ (and hence $g\cap S=P_1P_2\cap g$) lie in both $S(P_1,H_1)$ and $S(P_2,H_2)$, so that the number of flags of $M$ that lie in $S(P_2)$ but not in $S(P_1)$ is at most $q^3$. Since there are $q$ hyperplanes on $\pi$ different from $S$, these arguments show that the union of the special parts $S(P)$ for the $b_\pi$ points in question is at most $q\cdot (q+1)q^2+(b_\pi-q)q^3=(b_\pi+1)q^3$. Therefore, the union of the special parts $S(P)$ for all points $P\in \pi\setminus\ell$ contains at most
\[
    \gamma_\pi(q+1)q+a_\pi q^3+(b_\pi+1)q^3\le (\gamma_\pi+a_\pi+b_\pi)q^3+q^3=(q^2+1)q^3.
\]
Since there are $q+1$ planes of $S$ on $\ell$, it follows that
\[
    (q+1)q^3\theta_2\le |F|+(q+1)(q^2+1)q^3
\]
which shows $|F|\ge (q+1)q^4$. This is a contradiction to $|F|\le e_1$.
\end{proof}

\begin{corollary}
The chromatic number of $qK_{5;\{2,3\}}$ is $\theta_3-q$.
\end{corollary}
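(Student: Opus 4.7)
The plan is to combine the upper bound $\chi(qK_{5;\{2,3\}})\le\theta_3-q$, already witnessed by the explicit colorings in Examples~\ref{colorings}, with the matching lower bound $\chi(qK_{5;\{2,3\}})\ge\theta_3-q$, which I would extract from the preceding theorem $\F=C_0$.

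For the lower bound I would proceed by contradiction. Suppose $\F_0$ is a covering of the line-plane flags of $\PG(4,q)$ by $k<\theta_3-q$ EKR-sets, each taken maximal without loss of generality. A direct double-counting first forces $\F_0$ to contain at least one $e_0$-sized EKR-set: under the standing assumption $q>160\cdot 36^5$ the $\Gauss{5}{3}_q\Gauss{3}{2}_q$ line-plane flags cannot be covered by $k\le\theta_3-q$ EKR-sets each of size at most $e_1$, because $(\theta_3-q)\,e_1$ grows like $q^7$ whereas the total number of flags grows like $q^8$. Dualizing if necessary, I may also assume $\F_0$ contains a point-based $e_0$-sized EKR-set $\FFF(P,X)$ as in Examples~\ref{voorbeeld}.

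I would then pad $\F_0$ up to a family $\F$ of exactly $\theta_3-q$ distinct non-empty maximal EKR-sets by adjoining $\theta_3-q-k\ge 1$ further maximal EKR-sets, at least one of which is taken to be a hyperplane-based $e_0$-sized EKR-set $\FFF(H,Y)$ on a fresh hyperplane~$H$; plenty of hyperplanes are available, and the WLOG reduction used at the start of Section~\ref{sec_line-plane} (replacing an $e_0$-sized member by its strictly smaller special part whenever two members would share the same generic part) costs nothing here. The family $\F$ then satisfies all standing hypotheses of Section~\ref{sec_line-plane}, and the preceding theorem yields $\F=C_0$, which forces every $e_0$-sized member of $\F$ to be point-based in the frame selected by the theorem's internal duality reduction. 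Since $\F$ has been arranged to contain $e_0$-sized EKR-sets of both types, this conclusion is violated in both the primal and the dual frame, giving the desired contradiction. Hence $\chi(qK_{5;\{2,3\}})\ge\theta_3-q$, and combined with the upper bound this yields the claimed equality. The step that will require the most care is the interaction between my initial dualization and the one inside the theorem's proof; but since dualization merely swaps point-based and hyperplane-based $e_0$-sized EKR-sets, a family containing $e_0$-sized members of both types cannot satisfy $\F=C_0$ in either frame, so the contradiction is robust.
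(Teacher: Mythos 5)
Your proof is correct, and it follows the paper's own route: the explicit colorings of Examples~\ref{colorings} give the upper bound, and the structural theorem $\F=C_0$ from Section~\ref{sec_line-plane} gives the lower bound. The paper announces (in the \textsc{Notation} paragraph at the start of Section~\ref{sec_line-plane}) that it will show each member of a $(\theta_3-q)$-element covering is \emph{necessary}, and then derives the corollary from the theorem $\F=C_0$ without writing out the intermediate step; your ``padding'' argument is precisely a clean way to fill that gap. Two small remarks. First, your care about the double dualization is warranted and your resolution is right: an $e_0$-sized point-based EKR-set and an $e_0$-sized hyperplane-based EKR-set swap roles under duality, so a family containing one of each cannot become all point-based in either frame, hence cannot equal $C_0$. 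Second, there is an even shorter padding that sidesteps the duality bookkeeping entirely: adjoin as one of the extras a maximal EKR-set of size strictly less than $e_0$ (such sets exist by Theorem~\ref{blokuise0e1} together with the appendix's Case~D discussion). Size is duality-invariant, so such a member lands in $C_3$ in every frame, and $\F=C_0$ is contradicted at once with no discussion of which members are point- or hyperplane-based. Either realization is valid; yours is a bit longer but equally sound, and you correctly note that the ``same generic part'' normalization at the start of Section~\ref{sec_line-plane} can be arranged to be vacuous by choosing the extra hyperplane fresh (and even if a replacement by a special part did occur, it would place a member of size $q^2\theta_2<e_0$ into $C_3$, which again contradicts $\F=C_0$).
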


\textsc{remark}: We have also shown that every coloring class of a coloring with $q^3+q^2+1$ colors is a subset of a maximal independent set of $qK_{5;\{2,3\}}$, that is an independent set of cardinality $e_0$.

\begin{appendix}
\section{Appendix}
    {In \cite{blokhuis1}, the authors investigate EKR-sets of line-plane flags in $\PG(4,q)$. In the proof of their classification result, they consider EKR-sets of line-plane flags in $\PG(4,q)$ which are not contained in one of the sets given in Example \ref{voorbeeld}. For this, the authors distinguish several cases for the structure of such a set and, in most cases, provide an upper bound for its size. The weakest of these upper bounds is the number $e_1$ and it is given in the general case of the proof of Lemma 4.3 of \cite{blokhuis1}.}

    However, in Case D of Section 4.1 they do not actually provide an upper bound, but only show {that in this case, the sets cannot be contained in a set of Example \ref{voorbeeld}.}
    In order to use their result we first have to provide an upper bound for that case, too, and we shall do so below. We adapt their notation for the remainder of the appendix.

We are in the situation that there is one red plane $A_0$ and all of its lines are red as well. If there are more than $q+1$ red planes, then the arguments in \cite{blokhuis1} (second paragraph of section D) show that the number of elements in the EKR set is at most $e_1$. So here we consider the case that there are at most $q$ red planes apart from $A_0$.

If $A$ is a yellow plane, then $A\cap A_0$ is a line (so $A_0+A$ is a solid) and $A$ has a unique point $p(A)$ which lies in $A_0$ and such that a flag $(L,A)$ is in $\cal C$ if and only if $p(A)\in L$. The following holds and will be used several times below: If $A_1$ and $A_2$ are yellow planes, then $p(A_1)\in A_2$ or $p(A_2)\in A_1$ or $A_0+A_1=A_0+A_2$.

Suppose for any two yellow planes $A_1$ and $A_2$ with $p(A_1)=p(A_2)$ we have
$A_0\cap A_1=A_0\cap A_2$.
Then each point $P$ of $A_0$ satisfies $P=p(A)$ for at most $q^2+q$ yellow planes. In this situation, there are at most $\theta_2(q^2+q)$ yellow planes.

Suppose now that there is a point $P$ and two yellow planes $A_1$ and $A_2$ with
$A_0\cap A_1\not=A_0\cap A_2$ and $p(A_1)=p(A_2)=P$. Then each yellow plane must satisfy $P\in A$ or $A\subseteq A_0+A_1$ or $A\subseteq A_0+A_2$. The number of yellow planes is thus at most $2(q^3+q^2+q)+(q+1)(q^2-q)$. Note that equality can occur only when the solids $\langle A_0, A_1\rangle$ and $\langle A_0, A_2\rangle$ are
distinct.

In any case, the number of yellow planes is at most $y:=3q^3+2q^2+q$. If $A_0$ is the only red plane, it follows that $|{\cal C}|\le \theta_2^2+yq\le 4q^4+4q^3+4q^2+2q+1$. If $A_0$ is not the only red plane and there are $q$ others red planes $A$, then we treat these as the yellow planes above by choosing for $p(A)$ any point of $A\cap A_0$. Then the bound for $|{\cal C}|$ is almost the same except that we have to add $q\cdot q^2$, namely $q^2$ more flags for each of the $q$ red planes. Hence $|{\cal C}|\le e_1$.
\end{appendix}

\end{document}